\newtheorem{theorem}{Theorem}
\numberwithin{theorem}{section}
\newtheorem*{theorem*}{Theorem}
\newtheorem{definition}[theorem]{Definition}
\newtheorem*{definition*}{Definition}
\newtheorem{lemma}[theorem]{Lemma}
\newtheorem{proposition}[theorem]{Proposition}
\newtheorem*{Proposition*}{Proposition}
\newtheorem*{remark}{Remark}
\newtheorem*{lemma*}{Lemma}
\newcommand{\R}{\mathbb{R}}
\newcommand{\calS}{\mathcal{S}}
\newcommand{\scrC}{\mathscr{C}}
\newcommand{\scrF}{\mathscr{F}}
\newcommand{\calR}{\mathcal{R}}
\newcommand{\bpartial}{\overline{\partial}}
\newcommand{\changefont}{%
    \fontsize{10}{11}\selectfont
}
\title{On the holomorphic convexity of reductive Galois coverings over compact K\"ahler surfaces}
\author{Yuan Liu}
\date{}
  \changefont \textit{Date}: \today
\begin{document}
\maketitle
\thispagestyle{specialfooter}
\begin{abstract}
This article generalizes the result of Katzarkov and Ramachandran from algebraic surfaces to K\"ahler surfaces. We follow their argument to prove the holomorphic convexity of a reductive Galois covering over a compact K\"ahler surface which does not have two ends, except that we replace the $p$-adic factorization theorem by an analysis of the singularities of the continuous subanalytic plurisubharmonic exhaustion function.
\end{abstract}
\section{Introduction} \label{introduction}
The Shafarevich conjecture asks the following question: ``Is the universal covering of a compact K\"ahler manifold holomorphically convex?" A complex manifold is holomorphically convex if for any sequence of discrete points, we can always find a holomorphic function which is unbounded there. A relatively easier question would be: ``Can we construct a nonconstant holomorphic function on the universal covering of a compact K\"ahler manifold?" If we can show that some intermediate Galois coverings are holomorphically convex, then there are non-constant holomorphic functions on the universal covering, which are pullback of holomorphic functions on these intermediate coverings.\\
For the following, we only focus on complex surfaces where more tools are available. Let $X$ be a compact K\"ahler surface, $X'$ be a Galois covering over $X$ with group $\Gamma$ and $\rho:\Gamma\to GL_N(\mathbb{C})$ be a faithful discrete reductive representation. It is standard to construct a $\Gamma$-equivariant harmonic map, say $u$, from $X'$ into a naturally associated non-positively curved (NPC) space $N(\rho)$. By Bochner-Siu-Sampson formula, it is pluriharmonic. Given that the reductive representation is discrete, we can compose with the distance function squared on the target to get a weakly plurisubharmonic exhaustion function $f$ with non-empty degeneracy locus of its Levi form. Even though the existence of a weakly plurisubharmonic exhaustion function does not always imply holomorphic convexity, we can expect this for $X'$ by using some additional properties. If the exhaustion function is generically (i.e. away from a proper subvariety) strictly plurisubharmonic, by a theorem of Narasimhan (see Lemma 3.1 of \cite{KR} or theorem \ref{deg} below), we can show the holomorphic convexity of $X'$. Otherwise, the Levi form of the function $f$ degenerates everywhere. Now we have (i) a holomorphic foliation induced by the pluriharmonic map, and (ii) a semi-K\"ahler $(1,1)$-form working as a lower bound of Levi form of $f$ and the level set of $f$ is Levi-flat and foliated by this holomorphic foliation. The possible target of $u$ to consider is either a Riemannian locally symmetric space of noncompact type, a locally compact Euclidean building, or a direct product of these targets and flat Euclidean factors. When the target does not contain any Euclidean building factor, the function $f$ is real analytic and the result of Napier and Ramachandran (Theorem 4.6, \cite{NR1995}) implies holomorphic convexity. We call this the \textit{real analytic or smooth case} in the following. When there is a nontrivial Euclidean building factor in the target, the function $f$ is continuous subanalytic, and we will call this the \textit{subanalytic or non-smooth case} correspondingly.\\ 
\\
This paper aims to generalize the result of \cite{KR} from smooth projective surfaces to K\"ahler surfaces. In the non-smooth case, their proof relies on the $p$-adic factorization theorem (c.f. Theorem 1.3 \cite{KR}), which is only available on smooth projective varieties. Now we give a different proof using subanalyticity instead and thus generalize their result to K\"ahler surfaces. The scheme of proof is as following. Assume that we have a continuous subanalytic plurisubharmonic exhaustion function $f$, with its Levi-flat level set foliated by a complex codimension $1$ holomorphic foliation $\scrF$, then there are two situations: either there is one regular real analytic level set to proceed as in the smooth case, or we have singularities of $f$ (see definition \ref{critical_regular} below) lying in each generic level set. In the latter case, the degeneracy locus of the Levi form in each level set of $f$ is given by leaves of $\scrF$. Because $f$ is constant on the leaves, it degenerates locally to a subanalytic function defined on a disc. This shows the set of singularities of $f$ is also holomorphically foliated and we can use the subanalyticity and a Sard type theorem for subanalytic functions (see theorem \ref{sard}) to show the intersection of the set of singularities of $f$ with its generic level set coincides with a finite union of holomorphic leaves there. This process shows that we have enough many closed leaves, which are compact since they lie in the level set of an exhaustion function. Then we can use a standard argument on connectedness to show the existence of a proper holomorphic map from the reductive K\"ahler covering onto a Riemann surface with connected fibers. In particular, this shows $X'$ is holomorphically convex. The key theorem in this subanalytic case can be stated as:
\begin{theorem*}[\textbf{Key theorem in subanalytic case}]
Let $M$ be a connected noncompact complete K\"ahler manifold which does not have two ends and such that for each compact subset $K$ of $M$, there is an automorphism $\gamma$ of $M$ with $\gamma(K)\cap K=\emptyset$. If there is a continuous subanalytic plurisubharmonic exhaustion function $f$ and its generic Levi-flat level sets are holomorphically foliated by a given holomorphic foliation $\scrF$ of complex codimension $1$. Then $M$ admits a proper holomorphic map onto a Riemann surface with connected fibers. In particular, $M$ is holomorphically convex.
\end{theorem*}
This can be seen as the subanalytic version of Theorem 4.6 in \cite{NR1995}. Even though we only prove it for K\"ahler surfaces (see theorem \ref{convexity_subanalytic}), the argument there goes to arbitrary dimension almost verbatim. In fact, we restrict to K\"ahler surfaces simply to get a plurisubharmonic exhaustion function satisfying all the above conditions, which comes from a harmonic map associated with a reductive representation (see lemma \ref{non-archimedean}). With the key theorem working as a replacement of the $p$-adic factorization theorem in the subanalytic case, we prove a K\"ahler version of the main theorem in \cite{KR}. Furthermore, this process will be helpful when we consider more general targets, such as hyperbolic buildings or DM-complexes defined by Daskalopoulos and Mese \cite{DM}.\\
\\
\textbf{Acknowledgement:} The author would like to express sincere gratitude to Professor Mohan Ramachandran, for his advice, guidance, and constant encouragement on the research.

\section{The strategy of using harmonic maps} \label{harmonic_map}
The existence of a harmonic map is closely related to the following concept.
\begin{definition} \label{reductive}
We say a finitely generated group $\Gamma$ is reductive if it admits a faithful representation $\rho$ into a complex Lie group $G$, and its monodromy group (i.e. the Zariski closure of its image $\rho(\Gamma)$) is a reductive subgroup of $G$. 
\end{definition}
Hereafter, we will call a connected Galois covering $X'$ over a compact K\"ahler surface $X$ with a linear reductive infinite Galois group $\Gamma$ simply as a reductive covering surface $X'$ (over $X$ with group $\Gamma$). Also, all the representations below are assumed to be faithful unless stated otherwise.\\
If $G$ is defined over the archimedean field (i.e. $\mathbb{R}$ or $\mathbb{C}$), the intrinsically associated non-positively curved space $N(\rho)=G\slash K$ (here $K$ is a maximal compact subgroup of $G$) is a Riemannian locally symmetric space of non-compact type and $G$ acts isometrically on $G\slash K$ by left action. If $G$ is defined over a non-archimedean locally compact local field (i.e. a finite extension $E_{\mathfrak{p}}$ of $\mathbb{Q}_{\mathfrak{p}}$ where $\mathfrak{p}$ is a non-archimedean place), the intrinsically associated non-positively curved space $N(\rho)$ is a locally compact Euclidean building and $G$ acts on it by isometries.
\begin{definition}
A map $u:X'\to N(\rho)$ is called $\Gamma$-equivariant if for any $\gamma \in \Gamma$ and $x\in X'$, we have
$$u(\gamma\cdot x)= \rho(\gamma)\cdot u(x),$$
where $\Gamma$ acts on $X'$ by the deck transformation and $\rho(\Gamma)\subseteq G$ acts on $N(\rho)$ as isometries.
\end{definition}
Next, we talk about how to use a $\Gamma$-equivariant harmonic map to construct a real analytic or a continuous subanalytic plurisubharmonic exhaustion function with its level sets holomorphically foliated in the smooth case or non-smooth case separately. Additionally, we show how to get holomorphic convexity in the smooth case. The results in this section are well known and we include them for completeness.
\subsection{Real analytic (smooth) case}
Now the target $G\slash K$ is a Riemannian locally symmetric space of non-compact type. Because it is topologically contractible, we can construct a nonconstant continuous $\Gamma$-equivariant map $v$ (For example, see proof of Lemma 2.1, \cite{LR}). Since $X=X'/\Gamma$ is compact, the map $v$ is of finite $\Gamma$-equivariant energy (defined on $X$).\\
Now we need the following special case of the theorem by Corlette (c.f. corollary 3.5, \cite{Corlette}).
\begin{theorem}
Let $X'$ be a reductive covering surface and $G$ be a semisimple Lie group without compact factors. Let $K$ be a maximal compact subgroup of $G$ so that $G\slash K$ carries the structure of a Riemannian locally symmetric space of noncompact type. Then there is a nonconstant $\Gamma$-equivariant harmonic map $u:\ X'\to G\slash K$ with finite energy.
\end{theorem}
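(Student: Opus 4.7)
The plan is to obtain $u$ as the $t\to\infty$ limit of the equivariant harmonic map heat flow $\partial_t u_t=\tau(u_t)$ with initial data $u_0=v$, where $v$ is the continuous $\Gamma$-equivariant finite-energy map just recalled from Lasell--Ramachandran. Since $G/K$ is a complete simply connected manifold of non-positive curvature (Cartan--Hadamard), short-time existence of the flow holds, and $\Gamma$-equivariance is preserved by uniqueness. Pushing the energy density down to the compact quotient $X=X'/\Gamma$, the Eells--Sampson Bochner formula combined with the non-positivity of the target curvature yields long-time existence of the flow and monotone decrease of the energy $E(u_t)$.

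The real difficulty, and the step where I expect all the work to lie, is establishing subconvergence as $t\to\infty$. Fix a basepoint $x_0\in X'$. If the orbit $u_t(x_0)$ stays bounded in $G/K$, then by the uniform energy bound it stays bounded uniformly over a fundamental domain, and standard parabolic regularity gives a $C^\infty_{\mathrm{loc}}$ subsequential limit $u$, which is a $\Gamma$-equivariant harmonic map of finite energy. If instead $u_t(x_0)$ diverges, the NPC geometry of $G/K$ produces a limit point $\xi$ on the geodesic boundary $\partial_\infty(G/K)$; equivariance of the flow then forces $\rho(\Gamma)$ to fix $\xi$, so that $\rho(\Gamma)$ is contained in the proper parabolic stabilizer $P_\xi\subsetneq G$. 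This is precisely where the reductivity hypothesis is decisive: since the Zariski closure of $\rho(\Gamma)$ is reductive by Definition \ref{reductive}, Borel--Tits structure theory places it inside a Levi factor of $P_\xi$, and one may translate the divergent flow by a one-parameter subgroup in the unipotent radical of $P_\xi$ to produce an equivariant competitor with strictly smaller limiting energy, contradicting monotone decrease. This is the content of Corlette's argument in \cite{Corlette}.

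For nonconstancy, suppose $u$ were the constant map to a point $p\in G/K$. Equivariance would give $\rho(\Gamma)\subseteq\mathrm{Stab}(p)$, a conjugate of the maximal compact $K$ and therefore compact. But $\Gamma$ is infinite and $\rho$ is discrete and faithful by Definition \ref{reductive}, so $\rho(\Gamma)$ is an infinite discrete subgroup of $G$ and cannot sit inside any compact subgroup. Hence the harmonic $u$ produced above is automatically nonconstant, completing the proof.
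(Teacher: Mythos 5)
The paper offers no proof of this statement at all: it is quoted as a special case of Corollary 3.5 of \cite{Corlette}, so there is nothing internal to compare your argument against. Your sketch is a faithful reconstruction of Corlette's own heat-flow proof, and its architecture (long-time existence via Eells--Sampson on an NPC target, the dichotomy between subconvergence and divergence to the visual boundary, reductivity ruling out the divergent case, and nonconstancy from the fact that an infinite discrete $\rho(\Gamma)$ cannot lie in a point stabilizer conjugate to $K$) is correct; the nonconstancy argument in particular is the right one, since equivariant maps into the contractible space $G/K$ are all equivariantly homotopic and nonconstancy therefore cannot be inherited from the initial map $v$. Two steps are asserted rather than proved, and they are where the real content lies. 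First, divergence of $u_t(x_0)$ does not by itself produce a single $\rho(\Gamma)$-fixed point $\xi\in\partial_\infty(G/K)$; one needs the convexity of $t\mapsto d(u_t(x),u_s(x))$ and of the displacement functions to extract a well-defined fixed direction at infinity. Second, the claim that one can ``translate by a one-parameter subgroup in the unipotent radical of $P_\xi$ to get strictly smaller limiting energy'' is not quite how the contradiction is usually closed: the standard mechanism is that, once $\rho(\Gamma)$ is conjugated into a Levi factor of $P_\xi$, it preserves a proper totally geodesic subspace onto which the distance-nonincreasing nearest-point projection pushes the minimizing flow without raising energy, and one concludes by induction on the rank or dimension of the symmetric space. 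Finally, note that your nonconstancy argument uses discreteness of $\rho$, which is not in the statement of the theorem but is assumed throughout Section 2 of the paper (an infinite non-discrete subgroup can perfectly well lie in a compact subgroup of a semisimple $G$ without compact factors), so you should make that hypothesis explicit.
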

If $G$ is defined over $\mathbb{C}$, by Weil's restriction of scalars, we can see $G(\mathbb{C})$ as the set of $\R$-points of $\text{Res}_{\mathbb{C}|\R}(G)$. Now we may assume that $G$ is a real reductive Lie group, and the target is a real Riemannian locally symmetric space $G\slash K$ of noncompact type and has strongly negative curvature in the complexified sense. From the Bochner-Siu-Sampson formula, we have $u$ is pluriharmonic in the sense that its restriction to any complex analytic curve on $X'$ is harmonic (c.f. Theorem 2, \cite{Mok}).\\
We can define a holomorphic structure on the smooth complex vector bundle  $u^{*}(T_N^{\mathbb{C}})$, with respect to which the one-form $\theta:= \partial u$ is a holomorphic section of $T^{*(1,0)}_{X'}\otimes u^{*}(T_N^{\mathbb{C}})$ on $X'$. The distribution given by the kernel of $\theta$ defines a holomorphic foliation $\scrF$ on the complement of a complex analytic subvariety $V$ of complex codimension $\geqslant 2$ (i.e. the set of indeterminacy of $\theta$). In our situation, $V$ is a set of discrete points. Aslo, $u$ is constant on each leaf of $\mathscr{F}$ (For details, refer to \cite{Mok}, \cite{Siu2} or \cite{Eys1}).\\
Define 
$$f(x)= d^2\big(u(x),y_0\big)$$
where $y_0$ is a fixed point on the target and $d$ is the distance function induced by the Riemannian structure there. Then $d^2$ is strictly convex and real analytic. If we assume that $\Gamma$ is discrete, then $f(x)$ is a real analytic plurisubharmonic exhaustion function on $X'$. Also, it is constant on the leaves of the holomorphic foliation $\scrF$ defined by the holomorphic 1-form $\theta=\partial u$.\\
We summarize the above argument as:
\begin{lemma}[\textbf{Real analytic case}]\label{archimedean}
Let $X'$ be a reductive covering surface with group $\Gamma$ and the representation $\rho: \Gamma\to G$ be faithful and discrete. Assume $G$ is a real Lie group defined over the archimedean field, then there is a real analytic plurisubharmonic exhaustion function $f$ on $X'$ which is constant on the leaves of a holomorphic foliation $\scrF$ defined by a (closed) holomorphic 1-form $\theta=\partial u$, where $u$ is the associated $\Gamma$-equivariant harmonic map from $X'$ to $N(\rho)$.
\end{lemma}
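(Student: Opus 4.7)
The plan is to follow the construction outlined in the discussion preceding the lemma and verify each claim in turn. First, I would invoke the Lasell-Ramachandran construction (proof of Lemma 2.1 in \cite{LR}) to produce a continuous, nonconstant $\Gamma$-equivariant map $v:X'\to G/K$; since $X=X'/\Gamma$ is compact and $G/K$ is topologically contractible, such a $v$ exists and has finite energy. Applying Corlette's theorem then upgrades $v$ to a nonconstant $\Gamma$-equivariant harmonic map $u:X'\to G/K$ with finite energy, which will be the common source of both $\theta$ and $f$.

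Next, to invoke the Bochner-Siu-Sampson machinery I would first reduce to the real case: if $G$ is defined over $\mathbb{C}$, Weil's restriction of scalars identifies $G(\mathbb{C})$ with the $\R$-points of $\text{Res}_{\mathbb{C}|\R}(G)$, so one may assume $G$ is a real reductive group and $G/K$ a Riemannian locally symmetric space of noncompact type with strongly negative curvature in the complexified sense. The Bochner-Siu-Sampson formula then forces $u$ to be pluriharmonic, and following Mok and Siu this pluriharmonicity induces a holomorphic structure on $u^{*}(T_N^{\mathbb{C}})$ for which $\theta:=\partial u$ is a holomorphic section of $T^{*(1,0)}_{X'}\otimes u^{*}(T_N^{\mathbb{C}})$. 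Since $X'$ is a surface, the indeterminacy locus $V$ of $\theta$ has codimension at least $2$ and is therefore a discrete set of points, and the distribution $\ker\theta$ defines a holomorphic foliation $\scrF$ on $X'\setminus V$. On each leaf of $\scrF$ one has $\partial u=0$ by construction, and combining this with the real-valuedness of $u$ gives $\bpartial u=\overline{\partial u}=0$ as well, so $u$ is locally constant along leaves, and hence so is any function depending only on $u$.

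The function $f$ is then defined by $f(x)=d^{2}(u(x),y_0)$ for a fixed basepoint $y_0\in G/K$. Since $d^{2}(\cdot,y_0)$ is real analytic and strictly convex on a Riemannian locally symmetric space of noncompact type, and $u$ is pluriharmonic, $f$ is real analytic and plurisubharmonic on $X'$; leafwise constancy of $u$ passes immediately to $f$. The main point requiring care is the verification that $f$ is exhausting: one uses the $\Gamma$-equivariance of $u$ together with the assumption that $\rho(\Gamma)$ is discrete in $G$ to conclude that if $\{x_n\}\subset X'$ leaves every compact set then $u(x_n)$ leaves every compact subset of $G/K$, forcing $f(x_n)\to\infty$. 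This exhaustion step is essentially the only nontrivial obstacle — the existence of $u$, its pluriharmonicity, the holomorphic structure on the pullback bundle, and the leafwise constancy are all standard consequences of the cited results once $u$ is in hand — and it is precisely where both the discreteness hypothesis on $\rho$ and the properness of $d^{2}(\cdot,y_0)$ on the symmetric space target enter decisively.
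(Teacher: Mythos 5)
Your proposal is correct and follows essentially the same route as the paper, which itself presents this lemma as a summary of the preceding construction: Lasell--Ramachandran for the initial equivariant map, Corlette for the harmonic representative, Weil restriction plus Bochner--Siu--Sampson for pluriharmonicity, the Mok--Siu holomorphic structure making $\theta=\partial u$ holomorphic, leafwise constancy from $\partial u=\bpartial u=0$, and $f=d^2(u(\cdot),y_0)$ with exhaustion coming from discreteness of $\rho$ and properness of the $G$-action on $G/K$. Your added detail on the exhaustion step is a correct expansion of what the paper leaves implicit.
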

If $f$ is generically strictly plurisubharmonic, then $X'$ is holomorphically convex by a theorem by Narasimhan (For its proof, see lemma 3.1 \cite{KR}). Since any open Riemann surface is Stein, we can rewrite it for surfaces as:
\begin{theorem} \label{deg}
Let $X'$ be a connected noncompact complete complex surface admitting a continuous plurisubharmonic exhaustion function $f$ which is generically strictly plurisubharmonic. Then $X'$ is holomorphically convex.
\end{theorem}
Otherwise, we have $\sqrt{-1}\partial\overline{\partial}f$ degenerates everywhere. For any $x\in X'$, we can choose local coordinates $\{z_1,z_2\}$ such that $z_1$ provides the direction along which $\sqrt{-1}\partial\overline{\partial}f$ vanishes, then $(\sqrt{-1}\partial\overline{\partial}f\wedge \sqrt{-1}\partial\overline{\partial}f)(x)=0$. We can rewrite this as $\sqrt{-1}\partial\overline{\partial}f\wedge \sqrt{-1}\partial\overline{\partial}f\equiv 0$. A direct calculation gives:
$$\sqrt{-1}\partial\overline{\partial}f=(d^2)''\cdot\sqrt{-1}\partial u\wedge\overline{\partial}u+(d^2)'\cdot\sqrt{-1}\partial\overline{\partial}u=(d^2)''\cdot|\partial u|^2$$
because $u$ is pluriharmonic. Since $d^2$ is strictly convex, we have $\sqrt{-1}\partial\overline{\partial}f=0$ if and only if $\partial u=0$, which is exactly given by the leaves of $\scrF$. Since the $1$-dimension complex submanifold in each regular level set is given by the holomorphic leaves of $\scrF$ and $f$ is constant there, the regular level set of $f$ is Levi-flat and the degeneracy locus of its Levi-form there is given by the holomorphic leaves of $\scrF$. Now we are in the place to apply the theorem 4.8 of \cite{NR1995}, and we rewrite it for  K\"ahler surfaces in the real analytic case.
\begin{theorem}\label{analytic}
Let $X'$ be a Galois covering over a compact K\"ahler surface with group $\Gamma$. Assume that $X'$ does not have two ends and admits a real analytic plurisubharmonic exhaustion function with some regular real analytic Levi-flat level set, then $X'$ is holomorphically convex.
\end{theorem}
We include the proof here for the convenience of readers.
\begin{proof}
If $X'$ has at least three ends, then it admits a proper holomorphic map onto a Riemann surface with connected fibers by a classical result of Napier and Ramachandran (c.f. Theorem 3.4 \cite{NR1995}). Now we consider the case when $X'$ has exactly one end. By assumption, we can take a regular value $r$ of $f$ such that the real analytic level set $X'_r:= \{x\in X':\ f(x)=r\}$ is Levi-flat in the sense that $\sqrt{-1}\partial\bpartial{f}\equiv0$ on the maximal $1$-dimension holomorphic tangent subspace contained in $X'_r$. Choose a relatively compact neighborhood $U$ of the given level set $X'_r$ where $f$ is real analytic, and consider $g(x)=e^{-c\cdot f(x)}$, where $c$ is a sufficiently large constant to be determined. With respect to the hermitian metric induced by the background K\"ahler metric on the holomorphic tangent space, we can write $v\in T_x^{(1,0)}(U)$ as $v=v_1+v_2$, where $v_1\in T_x^{(1,0)}\big(X'_{f(x)}\big)$ and $v_2$ is in its orthogonal complement.\\
Denote $\mathcal{L}(g)$ to be the Levi-form of function $g$, then
\begin{equation*}
\begin{split}
\mathcal{L}(g)(v,v)=&e^{-cf}\big(c^2\cdot |\partial f(v)|^2-c\cdot \mathcal{L}(f)(v,v)\big)\\
=&c\cdot e^{-cf}\big(c|\partial f(v_2)|^2-\mathcal{L}(f)(v_2,v_2)\big)
\end{split}
\end{equation*}
Since $U$ is relatively compact, we can assume that $|\partial f(v_2)|^2\geqslant \mu|v_2|^2$ and $\mathcal{L}(f)(v_2,v_2))\leqslant \nu|v_2|^2$ for some $\mu, \nu>0$. Then $\mathcal{L}(g)(v,v)\geqslant c\cdot e^{-c\cdot f}(c\cdot\mu-\nu)|v_2|^2\geqslant 0$ if we choose $c>\nu/\mu$ and is strictly positive if $v_2\neq 0$.
\begin{figure}[ht!]
\centering
\begin{tikzpicture}
\draw [red,thick] plot [smooth] coordinates {(5,1) (2,1) (0,1) (-2,1) (-3,1) (-3.5,0) (-3,-1) (-2,-1) (5,-1)};
\draw[thick,blue] plot [smooth] coordinates {(-2,1) (-1.8,0) (-2,-1)};
\draw[thick,blue] plot [smooth,tension =2] coordinates {(3.5,1) (2,0) (.5,1)};
\node[below] at (2,1) {$\overline{\Omega_2}=\gamma(\overline{\Omega_1})$};
\node[left] at (-2.5,0) {$\overline{\Omega_1}$};
\node[red, text width=3cm] at (6,0) {unique end $e$};
\node[blue, right] at (-1.8,0) {new end};
\node[blue, below] at (2,0) {new end};
\draw[dashed,->,below] (1,-1.5)--(1,-.7);
\node[left] at (1,-1.5) {$W$}; 
\node[above] at (1,1.5) {$X'$}; 
\end{tikzpicture}
\label{fig:my_label}
\end{figure}
\\
This shows that $g(x)=e^{-c\cdot f(x)}$ is a weakly plurisubharmonic function on a neighborhood $U$ of $X'_r$. Denote $\Omega_1$ to be the union of relatively compact sublevel set $\{x\in X': f(x)<r\}$ with all compact components of its complement. By the maximal principle, the function $f$ is constant on these compact components. In particular, we have $f=r$ on $\partial \Omega_1$. Then $X'-\overline{\Omega_1}$ is the unique connected noncompact component (the end). By construction, $g$ is constant on $\partial \Omega_1$, say $g=a$. Then $-\log{(a-g)}$ provides the plurisubharmonic exhaustion function near $\partial \Omega_1$ on $U-\overline{\Omega_1}$. By patching $g$ and $f$ together (for example, take the function $\max{(f,-\log{(a-g)})}$), we get that $X'-\overline{\Omega_1}$ is connected and weakly 1-complete.\\
Since $\Gamma$ is properly discontinuous, we can find $\gamma\in\Gamma$ such that $\Omega_2=\gamma(\Omega_1)$ satisfies $\overline{\Omega_2}\cap\overline{\Omega_1}=\emptyset$. Then $\gamma(X'-\Omega_1)=X'-\gamma(\Omega_1)=X'-\Omega_2$ is connected as the image of a connected set. Also, since $\gamma$ is biholomorphic, $g\circ\gamma^{-1}$ gives a weakly plurisubharmonic function near the boundary of $\Omega_2$. Take $W$ to be the unique connected component of $X'-(\overline{\Omega_1}\cup\overline{\Omega_2})$ which is noncompact. By patching $-\log{(a-g)},-\log{(a-g)}\circ\gamma^{-1}$ and $f$ as before, we get a weakly plurisubharmonic exhaustion function on $W$. In conclusion, we show that $W$ is a connected weakly 1-complete K\"{a}hler surface with three ends.\\
Now we have a proper holomorphic map $\tau$ from $W$ onto a Riemann surface $R$ with connected fibers. By analytic continuation, we can extend $\tau$ to $X'$. Precisely, choose $U$ to be an open set in the regular values of $\tau$ in $R$ and two distinct points $p$ and $q$ there. Then the connected K\"{a}hler surface $V:=X'-\tau^{-1}(\{p,q\})$ is weakly 1-complete with three ends. Thus there is a connected Riemann surface $S$ and a proper holomorphic map $\sigma: V\to S$ with connected fibers. The maps $\tau$ and $\sigma$ together give a holomorphic map from $X'$ to some Riemann surface $T$ by identifying each point $x$ in $U-\{p,q\}$ with $\sigma(\tau^{-1}(x))$ in $S$. In particular, $X'$ is holomorphically convex.\\
\end{proof}
Combine the results of lemma \ref{archimedean}, theorem \ref{deg} and theorem \ref{analytic}, we get the following well-known result in the real analytic case:
\begin{theorem}[\textbf{Holomorphic convexity in real analytic case}]\label{archimedean_convexity}
Let $X'$ be a reductive covering surface which does not have two ends and the representation defined over the archimedean field be faithful and discrete, then $X'$ is holomorphically convex.
\end{theorem}
\subsection{Subanalytic (non-smooth) case}
Here, the target $Y=N(\rho)$ is a locally compact Euclidean building. It is still contractible and we can similarly construct a nonconstant $\Gamma$-equivariant map $v$ into $Y$ with finite energy, for $X$ is compact.\\
The existence of a nonconstant $\Gamma$-equivariant harmonic map of finite energy now is guaranteed by the theory of Gromov and Schoen (c.f. Theorem 7.1, \cite{GS} for locally compact F-connected complexes). For our special situation, we have:
\begin{theorem}
Let $X'$ be a reductive covering surface and $G$ be a reductive complex Lie group defined over a non-archimedean locally compact local field. Denote $Y=N(\rho)$ to be the associated locally compact Euclidean building. If there is a non-constant Lipschitz $\Gamma$-equivariant map $v:X'\to Y$ with finite energy, then there is a $\Gamma$-equivariant Lipschitz harmonic map $u$ of least energy, in the sense that the restriction of $u$ to a small ball about any point is energy minimizing.
\end{theorem}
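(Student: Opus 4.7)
The plan is to reduce the statement to the direct-method existence theorem of Gromov and Schoen (Theorem 7.1 of \cite{GS}) for equivariant harmonic maps into F-connected complexes. First, I would verify that the locally compact Euclidean building $Y=N(\rho)$ fits the Gromov--Schoen framework: each apartment is a Euclidean space, any two points lie in a common apartment, and the gluings are by isometries along totally geodesic subcomplexes, so $Y$ is a locally compact NPC (CAT(0)) F-connected complex. Because $X=X'/\Gamma$ is compact, a fundamental domain makes the $\Gamma$-equivariant energy $E(u)=\int_{X} e(u)\,dV$ well-defined, and the hypothesis that $v$ is nonconstant with finite energy gives $E_0:=\inf E\leqslant E(v)<\infty$, with the infimum taken over all $\Gamma$-equivariant maps of locally finite energy.

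Next I would take an energy-minimizing sequence $\{u_n\}$ of $\Gamma$-equivariant maps and extract a convergent subsequence. The Korevaar--Schoen Sobolev theory for maps into NPC spaces converts the uniform energy bound into a uniform modulus of continuity on compact subsets of $X'$, so after passing to a subsequence $u_n$ converges locally uniformly to a $\Gamma$-equivariant map $u:X'\to Y$. Lower semicontinuity of the energy under such convergence then yields $E(u)=E_0$. The local energy-minimizing property is automatic at the limit: if $u$ failed to minimize energy on some small geodesic ball, replacing $u$ there by the local Dirichlet minimizer (which exists by convexity of the distance function on $Y$) would strictly reduce the total equivariant energy, contradicting $E(u)=E_0$.

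The main obstacle is securing compactness of the minimizing sequence, since $Y$ is not compact and one could a priori translate $u_n$ by isometries of $Y$ to lose the limit at infinity. Here the reductivity of $\rho$ and the discreteness of $\Gamma$ enter crucially: the Gromov--Schoen alternative (Section 7 of \cite{GS}) asserts that failure of a minimizer to exist forces the existence of a $\rho(\Gamma)$-fixed point on the ideal boundary $\partial_\infty Y$, which in the Euclidean-building setting would place $\rho(\Gamma)$ inside a proper parabolic subgroup, contradicting the assumption that its Zariski closure is reductive. Ruling out this alternative is the delicate technical step; once it is excluded, the minimizing sequence stays in a bounded region of $Y$ modulo the $\Gamma$-action, and the compactness-plus-lower-semicontinuity argument above produces the desired $u$ of least energy.
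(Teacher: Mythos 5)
Your top-level route coincides with the paper's: the paper offers no proof of this statement at all, but simply invokes Theorem 7.1 of \cite{GS}, and your proposal is essentially an unpacking of the direct-method proof of that theorem (minimizing sequence, Korevaar--Schoen modulus-of-continuity and lower semicontinuity, local replacement by Dirichlet solutions). The outline is fine, but the step where you exclude escape to infinity is not correct as written. You argue that a $\rho(\Gamma)$-fixed point on $\partial_\infty Y$ would place $\rho(\Gamma)$ inside a proper parabolic subgroup, ``contradicting the assumption that its Zariski closure is reductive.'' There is no contradiction there: a reductive subgroup can perfectly well lie inside a proper parabolic --- every Levi factor is reductive, and a maximal torus lies in a Borel. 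In the paper's Definition \ref{reductive} the monodromy group is only required to be a reductive subgroup of $G$, not Zariski dense in $G$, so containment in a proper parabolic is entirely consistent with the hypotheses.

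The standard repair, and what the Gromov--Schoen argument actually does in this situation, is a further reduction rather than an outright contradiction: if the (reductive) Zariski closure fixes $\xi\in\partial_\infty Y$, it is conjugate into a Levi factor of the parabolic $P_\xi$ (equivalently, it also fixes a point antipodal to $\xi$), and hence preserves a sub-building of strictly smaller rank into which the minimizing sequence can be pushed; one then concludes by induction on the rank, with the base case being an action on a flat, where either a fixed point exists or the harmonic map problem is linear. Without this induction the alternative ``the minimizing sequence drifts to infinity'' is not actually ruled out, so as stated your argument has a genuine gap at precisely the delicate point you yourself identify as the crux. Since the paper treats the whole theorem as a black-box citation, the cleanest fix is either to carry out this Levi reduction explicitly or to cite the version of the existence theorem that already assumes only reductivity (rather than absence of fixed points at infinity) as its hypothesis.
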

Let us recall the definition of the singular and regular sets for harmonic maps into locally compact F-connected complexes.
\begin{definition} \label{regular_singular_harmonic}
Let $u$ be a harmonic map from a Riemannian manifold $X$ to a locally compact F-connected complex $Y$. A point $x\in X$ is called a regular point if there is $\sigma>0$ such that $u(B_{\sigma}(x))$ is contained in one F-flat (in the case of buildings, we use the terminology ``apartment'' instead). Otherwise, it is called a singular point. Denote the set of regular points and that of singular points as $\calR(u)$ and $\calS(u)$ respectively.
\end{definition}
Also, the definition of a pluriharmonic map is modified as:
\begin{definition}
We say that a harmonic map $u$ from a K\"{a}hler manifold to a locally compact F-connected complex is pluriharmonic if it is pluriharmonic in the usual sense on its regular set $\calR(u)$.
\end{definition}
Although we can only do the smooth differential geometry calculation on $\calR(u)$, the set $\calS(u)$ has Hausdorff codimension at least $2$ and the Bochner-Siu-Sampson formula still works (c.f. Theorem 7.3, \cite{GS}). The result is:
\begin{theorem}
A finite energy equivariant harmonic map from a K\"{a}hler manifold into a locally compact F-connected complex is pluriharmonic.
\end{theorem}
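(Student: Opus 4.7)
The plan is to establish $\partial\bpartial u\equiv 0$ on the regular set $\calR(u)$, which is precisely the definition of pluriharmonicity given just before the theorem. On $\calR(u)$ the map $u$ locally factors through a single apartment of the Euclidean building, i.e., it is a smooth map into a flat Euclidean space. The classical Siu-Sampson Bochner formula for harmonic maps from a K\"ahler manifold into a Riemannian target of nonpositive Hermitian sectional curvature therefore applies pointwise on $\calR(u)$, and since an apartment is flat the target-curvature term of the formula vanishes identically. What remains is a pointwise identity whose integrand is the nonnegative quantity $|\partial\bpartial u|^2$ (up to the volume form of the K\"ahler metric).

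The next step is to integrate this identity globally and apply Stokes' theorem to conclude $\int_X|\partial\bpartial u|^2\,dV=0$. By the $\Gamma$-equivariance of $u$ and the $\Gamma$-invariance of the K\"ahler metric on $X'$, the relevant scalar densities built from $u$ are $\Gamma$-invariant and descend to the compact quotient $X=X'/\Gamma$, where there are no boundary issues. If $u$ were smooth on all of $X'$, this would immediately yield pluriharmonicity. The obstruction is that the Bochner identity is only known on $\calR(u)$, so that integration by parts a priori picks up contributions from neighborhoods of the singular set $\calS(u)$.

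The main obstacle, and the real substance of the Gromov-Schoen argument, is controlling these singular contributions. Two ingredients are needed: first, the Hausdorff codimension of $\calS(u)$ is at least $2$, as already recorded in the paragraph preceding the theorem; second, finite energy together with the monotonicity formula for energy minimizers gives quantitative local integrability bounds on $|du|^2$ near $\calS(u)$. Using these, one constructs cutoff functions $\chi_\varepsilon$ vanishing in an $\varepsilon$-neighborhood of $\calS(u)$, equal to $1$ outside a slightly larger neighborhood, and satisfying $\|\nabla\chi_\varepsilon\|_{L^2}\to 0$. Testing the Bochner identity against $\chi_\varepsilon$ and integrating by parts on $X$, the error terms are controlled by $\|\nabla\chi_\varepsilon\|_{L^2}\,\|\partial u\|_{L^2}$ and vanish in the limit $\varepsilon\to 0$. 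This yields $\int_X|\partial\bpartial u|^2\,dV=0$, hence $\partial\bpartial u\equiv 0$ on $\calR(u)$, which is the desired pluriharmonicity.
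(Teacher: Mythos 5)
First, a remark on what you are being compared against: the paper offers no proof of this statement at all. It is quoted directly from Gromov--Schoen (Theorem 7.3 of \cite{GS}), and the paragraph preceding it merely records the two inputs your sketch also relies on, namely that $\calS(u)$ has Hausdorff codimension at least $2$ and that the Bochner--Siu--Sampson formula remains valid. Your outline does follow the strategy of the cited proof: reduce to the flat Bochner identity on $\calR(u)$, observe that the target-curvature term vanishes because apartments are Euclidean, descend the $\Gamma$-invariant densities to the compact quotient, and run a cutoff argument near the singular set.

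The gap is in your final estimate. The boundary term produced by testing the integrated Bochner identity against $\chi_\varepsilon$ has the form $\int d\chi_\varepsilon\wedge\beta$, where $\beta$ is built from pairings of the type $\langle D\partial u,\overline{\partial u}\rangle$; it couples the gradient of the cutoff to \emph{second} derivatives of $u$ multiplied by first derivatives, not to $|\partial u|^2$ alone. So the claimed bound $\|\nabla\chi_\varepsilon\|_{L^2}\,\|\partial u\|_{L^2}$ does not control it: one needs $|\nabla^2 u|\,|\nabla u|$ to be suitably integrable on the shrinking collars around $\calS(u)$, and finite energy together with interior Lipschitz regularity does not give this for free, since a priori the Hessian can blow up as one approaches the singular set. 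Establishing the requisite decay is the actual content of Sections 6--7 of Gromov--Schoen: they prove not merely that $\calS(u)$ has codimension $2$ but that near it the map is, to high order, effectively contained in lower-dimensional flats, and they run an induction on the dimension of the complex to obtain the derivative estimates that make the cutoff argument close. Your sketch treats this as a routine capacity estimate, and that is precisely the step where the argument as written would not go through.
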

Now we need the subanalyticity of the distance function squared on a locally compact Euclidean building. Before that, we recall the definitions of semianalytic  and subanalytic sets and functions (c.f. \cite{BM}).
\begin{definition}
Let $M$ be a real analytic manifold and $X$ be a subset of $M$.
\begin{itemize} 
\item[(a)] The subset $X$ is semianalytic if each point of $M$ has a neighbourhood $U$ such that $X\cap U$ is defined by equations and inequalities given by real analytic functions on $U$.
\item[(b)] The subset $X$ of $M$ is subanalytic if each point of $M$ admits a neighbourhood $U$ such that $X\cap U$ is a projection of a relatively compact semianalytic set (i.e. there is a real analytic manifold $N$ and a relatively compact semianalytic subset $A$ of $M\times N$ such that $X\cap U =\pi(A)$, where $\pi:M\times N\to M$ is the projection).
\item[(c)] A function $f:X\to \R$ is semianalytic(subanalytic) if its graph is semianalytic(subanalytic) in $M\times \R$.
\end{itemize}
\end{definition}

\begin{lemma}
Let $y_0$ be a fixed point on a locally compact Euclidean building $Y$ of dimension $k$, then the square of distance function defined as $$d^2(y):=d^2(y,y_0)$$
is subanalytic.
\end{lemma}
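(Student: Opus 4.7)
The plan is to reduce subanalyticity to a local statement and exploit two features of the building: (a) any two points of $Y$ lie in a common apartment, inside which the squared distance is a Euclidean polynomial; and (b) local compactness makes the polyhedral structure of $Y$ locally finite, so compact sets meet only finitely many cells.

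First, I would fix an arbitrary point $y\in Y$ together with a compact neighborhood $K$ of $y$. Local compactness of $Y$ implies that $K$ meets only finitely many chambers $C_1,\dots,C_N$ of the polyhedral structure, and each $C_i$ is a Euclidean polytope cut out by finitely many affine inequalities in its ambient apartment, hence semianalytic there. For each $C_i$ the building axiom supplies an apartment $A_i\cong\R^k$ containing both $C_i$ and the basepoint $y_0$. Since $Y$ is $\mathrm{CAT}(0)$ and $A_i$ is an isometrically embedded flat, the unique geodesic joining $y_0$ to any $y'\in C_i$ lies entirely in $A_i$ and realizes the intrinsic distance, so on $C_i$ we have
$$d^2(y',y_0)=\|y'-y_0\|_{A_i}^{2},$$
a quadratic polynomial in the Euclidean coordinates of $A_i$, hence real analytic. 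By axiom (B2) for buildings, any two apartments containing $y_0$ together with a common chamber are related by an isomorphism fixing both pointwise, so the quadratic expressions coming from different apartments agree on overlaps and the formula above defines $d^2|_K$ unambiguously.

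Assembling these pieces, the graph of $d^2$ over $K$ can be written as the finite union
$$\Gamma_K=\bigcup_{i=1}^{N}\bigl\{(y',t)\in C_i\times\R : t=\|y'-y_0\|_{A_i}^{2}\bigr\},$$
a finite union of semianalytic sets inside $K\times\R$, and therefore subanalytic. Since subanalyticity is a local property and $y$ was arbitrary, $d^2$ is subanalytic on all of $Y$.

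The main obstacle here is conceptual rather than computational, namely formalizing what ``subanalytic'' means on the building $Y$, which is a polyhedral complex rather than a manifold. I would handle this by fixing a subanalytic structure on $Y$ coming from its polyhedral atlas: each chamber is viewed as a semianalytic subset of its apartment $A\cong\R^k$, and the attaching maps between chambers are affine (hence analytic). Relative to this structure the inclusions $C_i\hookrightarrow Y$ are subanalytic, so the piecewise-polynomial description of $d^2$ above gives a genuinely subanalytic function, which is exactly what is needed when pulling $d^2$ back along the harmonic map $u:X'\to Y$ in the next section.
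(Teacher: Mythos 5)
Your proof is correct and rests on the same two pillars as the paper's argument: on an apartment containing both $y_0$ and the point in question the building metric restricts to the Euclidean metric, so $d^2(\cdot,y_0)$ is a quadratic polynomial there, and local finiteness of the complex reduces everything to a finite union of such real-analytic pieces, whose union of graphs is semianalytic and hence subanalytic. The genuine difference is where you localize. The paper works only near $y_0$: it takes the minimal stratum $Z_0$ containing $y_0$, the finitely many local apartments $Y_1,\dots,Y_l$ through $Z_0$, and glues the graphs of the Euclidean functions $d_i^2$ along the chamber $C_0$. You instead localize at an arbitrary point $y\in Y$, cover a compact neighborhood $K$ by the finitely many chambers it meets, and for each such chamber $C_i$ choose an apartment $A_i$ containing $C_i$ together with $y_0$. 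Your decomposition is the more robust one, since subanalyticity must be checked near every point of $Y$ and not only near $y_0$, and away from $y_0$ the relevant apartments are precisely those through a nearby chamber and $y_0$ rather than those selected by incidence with $Z_0$. You also make explicit two points the paper treats as obvious: the $\mathrm{CAT}(0)$ convexity of apartments, which is what turns $d(y',y_0)=\|y'-y_0\|_{A_i}$ into an identity of intrinsic distances rather than an inequality, and the choice of a subanalytic structure on the polyhedral complex $Y$ relative to which the statement is meaningful; the independence of the formula from the choice of $A_i$ is correctly disposed of by observing that all candidate expressions compute the same intrinsic quantity. What the paper's version buys is brevity and a picture adapted to the basepoint; what yours buys is a verification that actually covers all of $Y$.
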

\begin{proof}
Assume the strata containing $y_0$ with the lowest dimension is $Z_0$ and write the union of all apartments containing $Z_0$ as $\bigcup\limits_{i=1}^{l}Y_l$. On each $Y_i$, it is a Euclidean space of dimension $k$ and the distance function squared $d_i^2(y):= d^2(y, y_0)|_{Y_i}$ is real analytic. Also, being real analytic or subanalytic is a local property and we only need to consider the intersection with a small neighborhood of $y_0$.\\
Now denote the chamber whose closure contains $Z_0$ as $C_0$ (i.e. $C_0$ has $Z_0$ as frontier or $C_0=Z_0$). If $y_0$ is an interior point of $C_0$, then the distance function squared is locally that on $C_0$, which is real analytic. If $y_0$ is a boundary point of $C_0$, then $Y_i-\overline{C_0}$
(for any $2\leqslant i\leqslant l$) is a semianalytic subset and the restriction of distance function squared there is semianalytic. Take the coordinates $(x_i,t_i)\in Y_i\times\R$ in the graph of $d^2|_{Y_i}$ and define $M= Y_1\times (Y_2-\overline{C_0})\cdots\times (Y_l-\overline{C_0})$, $N=\{(t_1,\cdots,t_l): t_1=\cdots=t_l\}$(i.e. the diagonal which is real analytic) and $\pi:M\times N\to M\times\R$ is the natural projection given by $\pi(x_1,\cdots,x_l;t_1,\cdots,t_l)=(x_1,\cdots,x_l,t_1)$, then the graph of function $d^2(y)$ is given by $\pi(M\times N)$ and is subanalytic.  By definition, $d^2(y)$ is a continuous subanalytic function.\\
\end{proof}
\begin{remark}
By the same argument, we can prove that the distance function squared on an NPC DM-complex is subanalytic \cite{DM}.
\end{remark}
In addition, $d^2$ is strictly convex because the target is non-positively curved (in the sense of \cite{GS}). If we assume the action of $\Gamma$ is discrete, by composing with the pluriharmonic map $u$, we get that 
$$f(x)=d^2\big(u(x),y_0\big)$$
is a continuous subanalytic plurisubharmonic exhaustion function on $X'$. From the work of Eyssidieux (c.f. Proposition 3.3.6 of \cite{Eys1}), there is a closed real semipositive (1,1)-form $\omega_{\rho}$ (i.e. a semi-K\"ahler (1,1)-form) on the spectral covering $s: \widehat{X'}\to X'$. It can be written as $$\omega_{\rho}=s^{*}\sqrt{-1}\sum\limits_{i=1}^N \lambda_i\wedge\bar{\lambda_i}=:s^*(\omega_\rho')$$
where $\lambda_i$'s are holomorphic 1-forms well-defined on $X'$. Notice that these $\lambda_i$'s differ from each other by a Weyl group action, thus their kernels coincide and define a holomorphic foliation $\scrF$ on $\calR(u)$. Also, since the singular set $\calS(u)$ is complex analytic of codimension 1 and $\lambda_i$ is bounded for $u$ is Lipschitz in the interior, we can extend $\scrF$ across $\calS(u)$, which is well defined by holomorphic 1-form $\theta$ on the spectral covering $\widehat{X'}$. Then the function $f$ is constant on the leaves lying in $\calR(u)$ as in the smooth case. This is still true on $\calS(u)$ because we can approach the leaves inside it by leaves in the regular set $\calR(u)$, where $f$ is a constant.
\begin{remark}
For the definition of spectral coverings, see section 3.4 in \cite{Eys2}. For a more constructive way without using spectral coverings, see section 2.2.2 in \cite{Klingler}.
\end{remark}
If $f$ is generically strictly plurisubharmonic, we get the holomorphic convexity from theorem \ref{deg}. If not, since (lemma 3.3.12, \cite{Eys1})
$$\sqrt{-1}\partial\overline{\partial}f\geqslant c\cdot \omega_\rho'$$
in the sense of currents, we see that the degeneracy locus of the Levi-form of $f$ is given by these holomorphic leaves. The above argument is summarized as the following lemma.
\begin{lemma}[\textbf{Subanalytic case}] \label{non-archimedean}
Let $X'$ be a reductive covering surface with group $\Gamma$ and $\rho: \Gamma\to G$ be a faithful discrete representation into a complex Lie group defined over a non-archimedean locally compact local field, then there is a holomorphic foliation $\scrF$ given by a holomorphic 1-form $\theta$ being well defined on the spectral covering $\widehat{X'}$ of $X'$. Also, there is a continuous subanalytic plurisubharmonic exhaustion function $f$ on $X'$. It is constant on the leaves of $\scrF$ and its degeneracy locus of Levi-form is given by these holomorphic leaves.
\end{lemma}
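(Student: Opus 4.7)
The plan is to assemble the ingredients developed throughout this subsection into a single statement, in three steps.

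First, I construct the harmonic map and foliation. Because $X=X'/\Gamma$ is compact and the target $Y=N(\rho)$ is contractible, one builds a nonconstant finite-energy $\Gamma$-equivariant map $v:X'\to Y$ as in the archimedean case. Invoking the Gromov--Schoen existence theorem (Theorem 7.1 of \cite{GS}, quoted above), I upgrade $v$ to a $\Gamma$-equivariant harmonic map $u:X'\to Y$ of least energy. The Bochner--Siu--Sampson theorem for F-connected targets (Theorem 7.3 of \cite{GS}) then forces $u$ to be pluriharmonic on its regular set $\calR(u)$. Eyssidieux's construction from section 3.3.2 of \cite{Eys1} supplies a semi-K\"ahler $(1,1)$-form
\[
\omega_\rho \;=\; s^{*}\sqrt{-1}\sum_{i=1}^{N}\lambda_i\wedge\bar\lambda_i
\]
on the spectral covering $s:\hat{X'}\to X'$, whose summands $\lambda_i$ are holomorphic 1-forms on $X'$ that differ by isometric Weyl-group actions. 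Hence $\ker\lambda_1=\cdots=\ker\lambda_N$ and this common kernel defines a codimension-1 holomorphic foliation $\scrF$ on $\calR(u)$.

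Second, I extend $\scrF$ across the singular set $\calS(u)$. Since $\calS(u)$ is complex analytic of codimension at least 1 and the interior Lipschitz regularity of $u$ gives uniform bounds on each $\lambda_i$, a Hartogs-type argument extends the $\lambda_i$ holomorphically, so $\scrF$ is defined by a single holomorphic 1-form $\theta$ on all of $\hat{X'}$.

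Third, I build the exhaustion function. Fix a basepoint $y_0\in Y$ and put $f(x):=d^2(u(x),y_0)$. The preceding lemma shows that $d^2(\cdot,y_0)$ is subanalytic on $Y$, and since $u$ is real-analytic on $\calR(u)$ and globally Lipschitz, the composition $f$ is continuous and subanalytic. Strict convexity of $d^2$ on the NPC building target, combined with pluriharmonicity of $u$, yields plurisubharmonicity of $f$. Discreteness of the $\rho(\Gamma)$-action on $Y$ together with relative compactness of a fundamental domain for $\Gamma$ forces $f\to\infty$ on exits of compact sets, so $f$ is exhausting. To verify constancy on leaves $L\subset\calR(u)$, I read $u$ in Euclidean coordinates of an ambient apartment: $\partial u|_{L}=0$ because $\theta$ kills the tangent space of $L$, and $\bpartial u|_{L}=\overline{\partial u|_{L}}=0$ because these coordinates are real-valued, so $u$, and hence $f$, is constant on $L$. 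Constancy on leaves meeting $\calS(u)$ then follows by continuity of $f$ together with density of regular leaves.

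The step I expect to be the most delicate is not the assembly above but the control of $f$ across $\calS(u)$: one needs both that the $\lambda_i$ really do extend holomorphically (using the codimension bound on $\calS(u)$ and the Lipschitz bound from Gromov--Schoen) and that subanalyticity of $f$ survives the composition with $u$ over a target that is not a single Euclidean chart. Everything else is a fairly direct packaging of the theorems quoted earlier in this subsection.
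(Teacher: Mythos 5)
Your proposal is correct and follows essentially the same route as the paper: the lemma is precisely the summary of the preceding discussion (finite-energy equivariant map, Gromov--Schoen, Bochner--Siu--Sampson, Eyssidieux's form $\omega_\rho$ giving $\scrF$ on $\calR(u)$, extension across $\calS(u)$ via boundedness of the $\lambda_i$, and $f=d^2(u(\cdot),y_0)$ with subanalyticity from the building's distance function and constancy on singular leaves by approximation from $\calR(u)$). The only nitpick is terminological: since $\calS(u)$ has codimension $1$, the extension of the bounded $\lambda_i$ is a Riemann removable-singularity argument rather than a Hartogs-type one, though the paper itself does not name the theorem either.
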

We may expect to get an analogous result on holomorphic convexity as in the smooth case. This will be done in the next section.
\section{Proof of holomorphic convexity in subanalytic case}\label{proof}
Now, we want to get the holomorphic convexity in the subanalytic case. From the result of Lemma \ref{non-archimedean}, we only need to prove the following.
\begin{theorem}\label{convexity_subanalytic}
Let $X'$ be a reductive covering surface which does not have two ends and $\scrF$ be a holomorphic foliation of codimension 1 defined by a (closed) holomorphic 1-form $\theta$. If there is a subanalytic plurisubharmonic exhaustion function $f$ with its generic Levi-flat level set holomorphically foliated by the leaves of $\scrF$, then $X'$ admits a proper holomorphic map onto a Riemann surface with connected fibers. In particular, $X'$ is holomorphically convex.
\end{theorem}
Set $M=X'$. The indeterminacy set $\mathcal{I}$ of the holomorphic 1-form $\theta$ is of complex codimension 2, which is a set of discrete points when $M$ is a complex surface. Then $f(\mathcal{I})$ is also a set of discrete values in $\R$, because $f$ is proper. For the following choices of level sets of $f$, we always avoid this subset.\\
The proof can be divided into the following two cases:\\
\\
\textit{Case I: there are enough many real analytic level sets of $f$.}\\
In fact, we only need to find one real number $r\notin f(\mathcal{I})$ such that $f^{-1}(r)$ is real analytic. Then we can imitate the proof of theorem \ref{analytic} to get a proper holomorphic map from $M$ onto a Riemann surface with connected fibers.\\
\\
\textit{Case II: Almost all level sets of $f$ are not real analytic.}\\
This simply means that we cannot find $r\notin f(\mathcal{I})$ such that $f^{-1}(r)$ is real analytic. This case can be seen as the subanalytic version of Theorem \ref{analytic}.\\
First, we define \textit{critical/regular values and singular/regular sets for subanalytic functions}.
\begin{definition}\label{critical_regular}
Let $N$ be a subanalytic subset of a real analytic manifold $M$, and $f$ be a real-valued continuous subanalytic function on $N$, we say that $r\in f(N)$ is a critical value of $f$ if $f^{-1}(r)$ is \textit{not} a real analytic submanifold of $M$. Otherwise, it is called a regular value\footnote{\ Here we use the convention that the number not lying in the image set (i.e. $\forall r\notin f(N)$) is a regular value.}. Denote the set of critical values as $\mathscr{C}(f)$.\\
We say that $x\in N$ is a regular point if $f$ is real analytic at $x$; otherwise, we say that $x$ is a singular point. The set of regular and singular points are called regular set and singular set of $f$ respectively.\footnote{\ This definition of regular/singular set for subanalytic functions is different from that for harmonic maps in Definition \ref{regular_singular_harmonic}.}
\end{definition}
The existence of compact leaves is the key step to getting a proper holomorphic map onto a Riemann surface with connected fibers. Usually, we would try to show by comparison of dimensions that a leaf coincides with a level set of one holomorphic function or a level set of two linearly independent real-valued functions, which are naturally closed. But for here, we have only one real-valued function $f$, and the real dimension of its level set is 1 higher than that of the leaves. In this situation, we need to use the fact that $f$ is subanalytic to analyze its singular set lying in a fixed level set (which is also closed as the intersection of two closed subsets and is holomorphically foliated), which has the correct dimension and gets a chance to coincide with the leaves inside it. The subanalytic structure is essential here.\\
The most difficult part is stated as:
\begin{theorem}\label{allsingular}
Let $M$ be a Galois covering over a compact K\"ahler surface and $f$ be a continuous subanalytic plurisubharmonic exhaustion function which is constant on the leaves of a given codimension 1 holomorphic foliation $\scrF$. If every level set of $f$ is \textit{not} real analytic, then for almost every $r\in f(M)$, the set of singular points lying in the level set $f^{-1}(r)$ is given by a finite union of compact leaves of $\scrF$.
\end{theorem}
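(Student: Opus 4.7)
My plan is to reduce the problem locally to a subanalytic function on the $2$-real-dimensional leaf space of $\scrF$, apply a subanalytic Sard-type theorem to control the singular set of $f$ level by level, and then exploit the exhaustion property to upgrade ``plaques'' to ``compact leaves''. Fix $p \in M \setminus \mathcal{I}$. Near $p$ the closed, non-vanishing holomorphic $1$-form $\theta$ admits a local primitive $\theta = dh$, giving a holomorphic submersion $h \colon U \to h(U) \subset \mathbb{C}$ whose fibers are exactly the plaques of $\scrF|_U$. Since $f$ is constant on leaves, $f|_U = F \circ h$ for some continuous $F \colon h(U) \to \R$, and a graph/projection argument (using that $h$ is real analytic and the graph of $f$ is subanalytic, restricted to relatively compact sets) shows that $F$ is itself subanalytic. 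Therefore $\Sigma(f) \cap U = h^{-1}(\Sigma(F))$, and $\Sigma(F)$ is a subanalytic subset of $\R^2$ of real dimension at most $1$, because any subanalytic function is real analytic on an open dense subset.

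\textbf{Subanalytic Sard and globalisation.} Apply the subanalytic version of Sard's theorem (equivalently, subanalytic local triviality) to $F|_{\Sigma(F)} \colon \Sigma(F) \to \R$: the set of $r$ for which the fiber $F^{-1}(r) \cap \Sigma(F)$ has positive real dimension is a subanalytic subset of $\R$ of dimension $\leq 0$, hence discrete. Cover $M \setminus \mathcal{I}$ by a countable family $\{U_\alpha\}$ of such charts and let $E \subset \R$ be the union of the corresponding local exceptional sets together with the discrete set $f(\mathcal{I})$; then $E$ is countable, in particular of measure zero. For every $r \notin E$ we have the identification
\[
\Sigma(f) \cap f^{-1}(r) \cap U_\alpha \;=\; h_\alpha^{-1}\bigl(F_\alpha^{-1}(r) \cap \Sigma(F_\alpha)\bigr),
\]
which exhibits $\Sigma(f) \cap f^{-1}(r)$ globally as a locally finite, transversally separated union of plaques of $\scrF$.

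\textbf{From plaques to compact leaves.} Because $f$ is an exhaustion, $f^{-1}(r)$ is compact, so its closed subset $\Sigma(f) \cap f^{-1}(r)$ is compact and only finitely many plaques occur. The local transverse discreteness prevents a connected path in $\Sigma(f) \cap f^{-1}(r)$ from jumping between distinct plaques of a single chart, so each connected component coincides with a single leaf $L$ of $\scrF$. If such an $L$ were not closed, a limit point $q \in \overline{L} \setminus L$ would lie in a foliation chart $V$ where $\Sigma(f) \cap f^{-1}(r) \cap V$ is a finite disjoint union of closed plaques; the closure of $L \cap V$ would then be contained in this finite union, forcing $q$ into one of the plaques actually belonging to $L$ --- a contradiction. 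Hence each such $L$ is closed, hence compact, and we obtain the desired finite (in particular discrete) union of compact leaves.

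\textbf{Main obstacle.} The crux is the subanalytic Sard step together with its globalisation: one must verify that the ``discrete fiber'' property holds simultaneously on a countable cover of $M \setminus \mathcal{I}$ while the union of the local exceptional sets remains negligible, and that the locally defined reductions $F_\alpha$ can be shown subanalytic via a proper-map image argument on relatively compact pieces. A second delicate point is the closedness of the candidate leaves, which depends on the transverse discreteness extracted from the same Sard step --- if $r$ were permitted to lie in the exceptional set, a leaf could accumulate onto a neighboring plaque and fail to be closed.
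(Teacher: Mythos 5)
Your argument is correct in outline and arrives at the paper's conclusion, but the central Sard-type step is obtained by a genuinely different and more elementary mechanism. The paper works globally with the singular set $E=\Sigma(f)\subset M$, shows it is a closed, $\scrF$-saturated subanalytic set of real dimension at most $3$, and then invokes the uniformization theorem (Proposition \ref{uniform}) to present $E$ as the image of a proper real analytic map from a $3$-manifold and the resolution of subanalytic functions (Proposition \ref{resolution}) to make $\hat f=f|_{E}$ real analytic upstairs; the classical real-analytic Sard theorem there gives $\dim\hat f^{-1}(r)\leqslant 2$ for almost every $r$, which together with the lower bound $\geqslant 2$ from saturation by leaves pins $\hat f^{-1}(r)$ down as a discrete union of compact leaves. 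You instead push $f$ down to the $2$-real-dimensional local leaf space (the same reduction as the paper's ``local picture'' $f=\tilde f(z_1)$, $E\cap V=\tilde E\times\Delta$), note that the singular set $\Sigma(F_\alpha)$ of the descended function is a $1$-dimensional subanalytic set, and use the elementary fiber-dimension inequality for subanalytic maps on a relatively compact $1$-dimensional set to conclude that only finitely many values per chart have a positive-dimensional transversal fiber. This bypasses uniformization and resolution entirely and yields a countable (rather than merely measure-zero) exceptional set, at the cost of the globalization bookkeeping over a countable cover and of verifying subanalyticity of each $F_\alpha$, which your proper-image-of-the-graph argument handles. Two small points to add: you never invoke the hypothesis that no level set of $f$ is real analytic --- it is needed precisely to guarantee that $\Sigma(f)\cap f^{-1}(r)$ is nonempty (the paper notes $f^{-1}(r)\cap E\neq\emptyset$ explicitly, and this nonemptiness is what is used downstream), so a sentence to that effect is required; and in the closedness argument for a candidate leaf $L$ you should say explicitly that the finitely many plaques in a chart are fibers $h^{-1}(c_1),\dots,h^{-1}(c_k)$ over distinct points, so that points of $L$ sufficiently close to a limit point $q$ are forced into the plaque through $q$ itself, whence $q\in L$.
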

\begin{proof}
The proof of this theorem is divided into following steps.\\
\textit{Step (i): Propositions of subanalytic functions/sets.}\\
We list here all the properties of subanalytic functions/sets needed (c.f. the work by Bierstone and Milman \cite{BM} for proof).
\begin{proposition}[Theorem 0.1, \cite{BM}: Uniformization theorem] \label{uniform}
Suppose $M$ is a real analytic manifold and $X$ is a closed subanalytic subset of $M$. Then there is a real analytic manifold $N$ of the same dimension as $X$ and a proper real analytic map $\phi: N\to M$ such that $\phi(N)=X$.
\end{proposition}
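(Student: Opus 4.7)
The plan is to follow the Bierstone--Milman strategy of reducing the uniformization of subanalytic sets to Hironaka's resolution of singularities applied to semianalytic sets. First I would cover $M$ by a locally finite family of open sets $\{U_\alpha\}$ on each of which $X\cap U_\alpha$ is realized as the image of a relatively compact semianalytic set $S_\alpha\subset Z_\alpha$ under a proper real-analytic map $\psi_\alpha:Z_\alpha\to U_\alpha$; such a presentation exists because subanalyticity is defined precisely by local data of this form, and paracompactness of $M$ lets one arrange the cover to be locally finite.

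Next, the heart of the argument is to uniformize each semianalytic piece $S_\alpha$. Locally $S_\alpha$ is cut out by finitely many conditions $g_i\geqslant 0$ and $h_j=0$ among real-analytic functions on $Z_\alpha$. Applying Hironaka's embedded resolution of singularities to the analytic hypersurface $\{\prod_i g_i\cdot\prod_j h_j=0\}$ produces a proper modification $\pi:\widetilde{Z}_\alpha\to Z_\alpha$ by a locally finite sequence of blow-ups with smooth analytic centres, after which every pullback $g_i\circ\pi$ and $h_j\circ\pi$ is locally monomial in suitable coordinates. Consequently the preimage $\pi^{-1}(S_\alpha)$ decomposes into finitely many closed semianalytic pieces that, in those coordinates, are closed quadrants, hence real-analytic manifolds with corners.

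The decisive and most delicate step is dimension control: I must retain only those strata of $\pi^{-1}(S_\alpha)$ whose image is of dimension equal to $\dim X$, discarding the lower-dimensional parasitic components introduced by the blow-ups. Concretely I would let $N_\alpha$ be the analytic closure in $\widetilde{Z}_\alpha$ of the preimage of the top-dimensional regular locus of $X\cap U_\alpha$, then apply one further embedded resolution if needed to smooth $N_\alpha$ itself; this is legitimate because $N_\alpha$ is still semianalytic and pure of dimension $\dim X$. The composition $\phi_\alpha:=\psi_\alpha\circ\pi|_{N_\alpha}$ is then a proper real-analytic map from a real-analytic manifold of dimension $\dim X$ onto $X\cap U_\alpha$.

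Finally I would globalize by setting $N:=\bigsqcup_\alpha N_\alpha$ and $\phi:=\bigsqcup_\alpha\phi_\alpha$. Local finiteness of $\{U_\alpha\}$, properness of each $\phi_\alpha$, and closedness of $X$ in $M$ together guarantee that $\phi$ is proper with image $X$, while $\dim N=\dim X$ by construction. The principal obstacle throughout is exactly this dimension bookkeeping, since naive resolution enlarges ambient dimension; tracking which resolved strata genuinely cover the top-dimensional part of $X$, and showing that after a second resolution those strata glue into a manifold of the right dimension, is where the technical weight of \cite{BM} is concentrated.
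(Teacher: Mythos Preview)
The paper does not supply its own proof of this proposition; it is quoted verbatim from Bierstone--Milman and the reader is referred there. So the only comparison available is between your sketch and the actual argument in \cite{BM}.

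Your outline is faithful to Bierstone--Milman's strategy through the rectilinearization step: pass from subanalytic to local semianalytic data by definition, then blow up until the defining functions are monomial so that the semianalytic set becomes a locally finite union of closed coordinate quadrants. The dimension bookkeeping you flag is also a genuine concern and you address it in the right spirit.

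There is, however, a gap at the step where you ``apply one further embedded resolution if needed to smooth $N_\alpha$ itself.'' If $N_\alpha$ is taken to be the \emph{analytic} closure of the relevant preimage, then it is an analytic variety and resolution applies, but you have thrown away the inequality constraints $g_i\geqslant 0$; the resulting $\phi_\alpha$ will in general have image strictly larger than $X\cap U_\alpha$. If instead $N_\alpha$ is the semianalytic quadrant itself, then it is a manifold with corners, not an analytic subvariety, and embedded resolution in Hironaka's sense does not apply to it. Either reading leaves the argument incomplete.

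What \cite{BM} actually does at this point is a direct power substitution: a closed quadrant $\{x_1\geqslant 0,\dots,x_k\geqslant 0\}\subset\mathbb{R}^n$ is the image of $\mathbb{R}^n$ under the proper real-analytic map $(t_1,\dots,t_n)\mapsto(t_1^2,\dots,t_k^2,t_{k+1},\dots,t_n)$. Composing this with the blow-down and the original $\psi_\alpha$ gives the local uniformizing map from a genuine manifold of the correct dimension onto $X\cap U_\alpha$, with image exactly the quadrant and hence exactly the piece of $X$ you want. Inserting this squaring trick in place of your second resolution step closes the gap.
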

\begin{proposition}[Proposition 5.3, \cite{BM}: Resolution of subanalytic functions] \label{resolution}
Suppose $M$ is a real analytic manifold and $f$ is a continuous subanalytic function on $M$. Then there is a real analytic manifold $N$, of the same dimension as $M$, and a proper surjective real analytic map $\phi: N\to M$ such that $f\circ\phi$ is real analytic on $N$.
\end{proposition}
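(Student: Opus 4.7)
The plan is to combine the resolution of subanalytic functions (Proposition \ref{resolution}) with a local foliation-chart analysis and a Sard-type dimension estimate. First I would apply Proposition \ref{resolution} to obtain a proper surjective real analytic map $\phi : N \to M$ with $\tilde f := f \circ \phi$ real analytic on $N$. Let $\Sigma \subset M$ denote the singular set of $f$ in the sense of Definition \ref{critical_regular}; it is closed and subanalytic, and at any $x \notin \phi(\mathrm{Crit}(\phi))$ the map $\phi$ is a local real analytic isomorphism, so $f = \tilde f \circ \phi^{-1}$ is real analytic near $x$. Hence $\Sigma \subset \phi(\mathrm{Crit}(\phi))$, which forces $\dim_\R \Sigma \le 3$.

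The second step is to establish that $\Sigma$ is \emph{leaf-saturated}. Fix $x \in M \setminus \mathcal{I}$ and choose a foliation chart with coordinates $(w,z) \in \Delta_w \times \Delta_z \subset \mathbb{C}^2$ in which $\scrF$ is given by $\{z = \mathrm{const}\}$. Since $f$ is constant on leaves, in these coordinates $f(w,z) = F(z)$ for a continuous subanalytic real-valued function $F$ on $\Delta_z$, and real analyticity of $f$ at $(w_0,z_0)$ is equivalent to real analyticity of $F$ at $z_0$, independently of $w_0$. Thus $\Sigma \cap (\Delta_w \times \Delta_z) = \Delta_w \times \Sigma_F$ where $\Sigma_F \subset \Delta_z$ is the singular set of $F$, exhibiting $\Sigma$ locally as a union of entire leaves of $\scrF$.

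The third step is the Sard-type estimate. Applying the real analytic Sard theorem stratum by stratum to $\tilde f|_{\mathrm{Crit}(\phi)}$ (a real analytic function on a real analytic set of dimension $\le 3$), the set of critical values has Lebesgue measure zero; adjoining the discrete set $f(\mathcal{I})$ gives a negligible set $E \subset \R$. For $r \notin E$ the fiber $\tilde f^{-1}(r) \cap \mathrm{Crit}(\phi)$ has dimension at most $2$, so $\Sigma \cap f^{-1}(r) \subset \phi\bigl(\tilde f^{-1}(r) \cap \mathrm{Crit}(\phi)\bigr)$ also has real dimension at most $2$. This set is closed in the compact set $f^{-1}(r)$ (compactness from $f$ being an exhaustion), leaf-saturated by step two, and in each foliation chart equals $\Delta_w \times (\Sigma_F \cap F^{-1}(r))$ with $\Sigma_F \cap F^{-1}(r)$ zero-dimensional for such generic $r$; thus locally it is a finite disjoint union of leaves, and globally a finite such union by compactness of $f^{-1}(r)$. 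Each leaf $L$ occurring in $\Sigma \cap f^{-1}(r)$ is then closed: its closure lies in the compact leaf-saturated set $\Sigma \cap f^{-1}(r)$, and the local chart description (where distinct leaf-sheets sit over disjoint values of $z$) forces $\bar L = L$, so $L$ is compact.

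The main obstacle will be step three: rigorously transferring the Lebesgue-almost-everywhere Sard bound from $\tilde f|_{\mathrm{Crit}(\phi)}$ on $N$ to the dimension bound $\dim_\R(\Sigma \cap f^{-1}(r)) \le 2$ on $M$, since $\mathrm{Crit}(\phi)$ can be singular as a real analytic set and $\phi$ may fold it non-injectively onto $\Sigma$. A careful stratification of $\mathrm{Crit}(\phi)$ into real analytic submanifolds, an application of classical Sard on each stratum, together with the fact that proper subanalytic maps do not increase dimension, should resolve this; Proposition \ref{uniform} could also be invoked to desingularize $\mathrm{Crit}(\phi)$ if needed.
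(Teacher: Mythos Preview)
Your proposal does not address the stated Proposition~\ref{resolution} at all. That proposition is the resolution-of-subanalytic-functions result of Bierstone--Milman, and the paper does not prove it; it is simply quoted from \cite{BM} and used as a black box. What you have written is instead a proof sketch of Theorem~\ref{allsingular}, and you even invoke Proposition~\ref{resolution} as an input in your very first sentence. So as a proof of the stated proposition, the proposal is vacuous.

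If your intended target was Theorem~\ref{allsingular}, then your outline is essentially the paper's argument: step~1 is the paper's part~(b), step~2 is parts~(c)--(d), and step~3 is part~(e). The one substantive difference is in the Sard step. You propose to stratify the critical set $\mathrm{Crit}(\phi)\subset N$ and apply Sard stratum-by-stratum to $\tilde f|_{\mathrm{Crit}(\phi)}$, then push the dimension bound forward by $\phi$. You correctly flag this as the delicate point. The paper avoids it entirely: instead of working on $\mathrm{Crit}(\phi)$, it restricts $f$ to the singular set $E\subset M$, uniformizes $E$ by Proposition~\ref{uniform} to get a genuine $3$-manifold $F$, and then applies Proposition~\ref{resolution} a second time to $\hat f\circ\psi_1$ on $F$, obtaining a real analytic function $g$ on a $3$-manifold $G$ to which classical Sard applies with no stratification needed. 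Surjectivity of $\psi=\psi_1\circ\psi_2$ then gives $\hat f^{-1}(r)=\psi(g^{-1}(r))$ exactly, not merely an inclusion, so the dimension comparison is immediate. This is precisely the cleaner route you allude to at the end when you mention invoking Proposition~\ref{uniform}; the paper just commits to it from the start.
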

\begin{proposition}[Theorem 6.1, \cite{BM}: low dimensional subanalytic sets] \label{lowdim}
Let $M$ be a real analytic manifold and let $X$ be a subanalytic subset of $M$. If $\ \dim(M)\leqslant 2$ or $\dim(X)\leqslant 1$, $X$ is semianalytic.
\end{proposition}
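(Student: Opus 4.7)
The plan is to use the Uniformization Theorem (Proposition~\ref{uniform}) to write $X$ as the image of a proper real-analytic map from a manifold of dimension equal to $\dim X$, and then exploit the fact that in either of the two allowed low-dimensional regimes, a $1$-dimensional semianalytic subset of $M$ locally cuts its complement into only finitely many pieces, each determined by signs of finitely many real-analytic functions. This ``sign-chamber'' description is what ultimately certifies semianalyticity.

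\textbf{Case $\dim(X)\leq 1$.} The case $\dim(X)=0$ is immediate, since a $0$-dimensional subanalytic set is locally finite, hence trivially semianalytic. Assume $\dim(X)=1$ and apply Proposition~\ref{uniform} to get a proper real-analytic $\phi\colon N\to M$ with $N$ a real-analytic $1$-manifold and $\phi(N)=X$. Over any relatively compact open $U\subset M$, properness makes $\phi^{-1}(\overline{U})$ a compact subset of $N$ meeting only finitely many components of $N$, on which $\phi$ has only finitely many critical points. Their images form a locally finite, hence semianalytic, set; each arc between consecutive critical points maps to an embedded real-analytic curve, which is locally cut out by a real-analytic equation together with a sign condition. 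Thus $X$ is locally a finite union of semianalytic pieces, hence semianalytic.

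\textbf{Case $\dim(M)\leq 2$.} If $\dim(X)\leq 1$, the first case applies, so assume $\dim(X)=2$. The frontier $\partial X=\overline{X}\setminus\mathrm{int}(X)$ is subanalytic of dimension at most $1$, hence semianalytic by the previous case. Locally it is the zero locus of finitely many real-analytic functions $g_1,\ldots,g_k$, so $M\setminus\partial X$ splits into finitely many local connected components, each described by a fixed choice of signs of the $g_i$. Every such component lies entirely in $X$ or entirely in $M\setminus X$, so $\mathrm{int}(X)$ is a finite union of sign-chambers; what remains of $X$ is $X\cap\partial X$, a subanalytic subset of a $1$-dimensional semianalytic set, which is semianalytic again by the first case.

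The main obstacle is justifying that each connected component of $M\setminus\partial X$ lies wholly inside or wholly outside $X$, rather than meeting $X$ in some transcendentally interlocking way. This relies on the curve-selection lemma and the observation that in a $2$-manifold no such interleaving can persist subanalytically across a semianalytic boundary: if $X$ met a sign-chamber in a proper subanalytic subset of full dimension, one could extract a real-analytic curve along which $X$ had a nowhere-analytic boundary, contradicting semianalyticity of $\partial X$. This low-ambient-dimension rigidity is precisely the input that fails in dimensions $\geq 3$, where genuinely non-semianalytic subanalytic sets exist, and it is the step in the Bierstone--Milman argument that uses the dimension hypothesis essentially.
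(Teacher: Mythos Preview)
The paper does not prove this proposition at all: it is listed among the ``properties of subanalytic functions/sets needed'' with the parenthetical ``cf.\ the work by Bierstone and Milman \cite{BM} for proof,'' and is invoked as a black box. So there is no argument in the paper to compare yours against.

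As for your sketch on its own merits, the overall architecture (uniformize, handle $\dim X\le 1$ first, then bootstrap via the frontier when $\dim M\le 2$) is the right shape, but two steps are not justified. In Case~1, ``each arc between consecutive critical points maps to an embedded real-analytic curve, which is locally cut out by a real-analytic equation together with a sign condition'' is fine at smooth image points, but the semianalytic description must hold in a neighborhood of \emph{every} point of $M$, in particular at endpoints and accumulation points of the image; this is exactly where Bierstone--Milman invoke a fiber-cutting/Weierstrass-preparation argument, and you have not supplied a substitute. In Case~2, you assert that $\partial X$ is ``locally the zero locus of finitely many real-analytic functions $g_1,\ldots,g_k$'' and that $M\setminus\partial X$ therefore decomposes into finitely many sign-chambers; but a semianalytic set is described by equalities \emph{and} inequalities, so the complement is not simply a union of sign-chambers of some $g_i$, and even if it were, a connected component of an open semianalytic set is not automatically semianalytic. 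Your final paragraph acknowledges this is the crux but replaces a proof by an appeal to curve selection and an informal ``rigidity'' claim; that is precisely the content of the theorem, so the argument is circular as written.
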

\begin{proposition}[Theorem 3.10, \cite{BM}: theorem of the complement] \label{complement}
Let $M$ be a real analytic manifold and let $X$ be a subanalytic subset of $M$. Then $M-X$ is subanalytic.
\end{proposition}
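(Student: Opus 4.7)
The plan is to combine the subanalytic structure theorems of the preceding subsection (Propositions \ref{uniform}, \ref{resolution}, \ref{lowdim}, \ref{complement}) with a leaf-invariance argument to reduce the claim to a Sard-type statement on a real analytic model of $\calS(f)$. The key observation is that the singular set $\calS(f)$ is saturated by the leaves of $\scrF$. Indeed, working away from the discrete indeterminacy set $\mathcal{I}$ of $\theta$ (whose $f$-image is a countable set that we may simply discard from the allowed values of $r$), choose at each point local holomorphic coordinates $(z,w)$ adapted to $\scrF$ so that the leaves are fibers of $w$. Since $f$ is constant on leaves, $f=F(w,\bar w)$ locally for some continuous subanalytic $F$ on an open subset of $\mathbb{C}$, and real analyticity of $f$ at a point amounts to real analyticity of $F$ at the transverse coordinate, which depends only on the leaf. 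Adapted chart transitions are holomorphic in $w$, so this property propagates along connected leaves.

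The second step bounds the dimension of $\Sigma_r:=\calS(f)\cap f^{-1}(r)$. By Proposition \ref{complement}, $\calR(f)$ is open subanalytic and therefore $\calS(f)$ is closed subanalytic in $M$; since real analytic points of a subanalytic function form an open dense subset, $\dim_{\R}\calS(f)\le 3$. Apply Proposition \ref{uniform} to $\calS(f)$ to produce a proper real analytic $\phi:N_0\to M$ with $\phi(N_0)=\calS(f)$ and $\dim N_0=\dim\calS(f)$. Then $f\circ\phi$ is continuous subanalytic on $N_0$; apply Proposition \ref{resolution} to obtain $\psi:N\to N_0$ proper surjective real analytic with $\tilde f:=f\circ\phi\circ\psi$ real analytic on $N$. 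The classical Sard theorem gives that almost every $r\in\R$ is a regular value of $\tilde f$, so $\tilde f^{-1}(r)$ is a real analytic submanifold of $N$ of dimension at most $2$, and its image $\Sigma_r=(\phi\circ\psi)(\tilde f^{-1}(r))$ has $\dim_{\R}\Sigma_r\le 2$. Moreover, on $\calR(f)$ the function $f$ is itself real analytic, so classical Sard applied to $f|_{\calR(f)}$ produces a full-measure set of $r$ for which $\calR(f)\cap f^{-1}(r)$ is a real analytic submanifold; the standing hypothesis that $f^{-1}(r)$ is never a real analytic submanifold then forces $\Sigma_r\neq\emptyset$ for such $r$.

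For almost every $r$, then, $\Sigma_r$ is a nonempty closed subanalytic subset of $M$ of real dimension at most $2$, saturated by the real $2$-dimensional leaves of $\scrF$, and contained in the compact level set $f^{-1}(r)$ (compact because $f$ is an exhaustion), hence itself compact. In local adapted coordinates $\Sigma_r$ has the product form $\{z\text{-direction}\}\times S_r$ with $S_r\subseteq\mathbb{C}$ a $0$-dimensional subanalytic subset; by Proposition \ref{lowdim} $S_r$ is semianalytic, hence locally finite. The local product structure identifies each connected component of $\Sigma_r$ with a single leaf of $\scrF$. Since a compact subanalytic set has only finitely many connected components, $\Sigma_r$ is a finite (hence discrete) disjoint union of leaves, each of which is closed in the compact set $\Sigma_r$ and therefore compact.

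The main obstacle is the dimension bound $\dim_{\R}\Sigma_r\le 2$ in Step 2. Nothing in the data a priori prevents $\calS(f)$ from containing three-dimensional pieces that $f$ collapses onto a single value, which would inflate $\Sigma_r$ to real dimension $3$ and destroy the product-with-$0$-dimensional-transverse picture used in Step 3. Circumventing this requires producing a real analytic reparametrization of $\calS(f)$ by composing the uniformization and the resolution of subanalytic functions, on which the classical Sard theorem supplies the needed generic dimension bound; the subanalyticity assumption on $f$ is essential precisely here.
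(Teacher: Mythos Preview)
Your proposal does not address the stated proposition at all. Proposition~\ref{complement} is the \emph{theorem of the complement} quoted from Bierstone--Milman: if $X\subseteq M$ is subanalytic, then $M\setminus X$ is subanalytic. The paper does not supply a proof of this; it is listed in part a) of Section~\ref{proof} purely as a black-box input. What you have written is instead a proof sketch for Theorem~\ref{allsingular}: it speaks of a plurisubharmonic exhaustion $f$, a holomorphic foliation $\scrF$, the singular set $\calS(f)$, leaf-saturation, a Sard-type dimension bound, and the conclusion that $\Sigma_r$ is a finite union of compact leaves. None of these objects occur in the statement you were asked to prove, and conversely nothing in your text establishes that the complement of a subanalytic set is subanalytic.

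Even read as a sketch for Theorem~\ref{allsingular}, there is a labelling slip worth fixing: when you write ``By Proposition~\ref{complement}, $\calR(f)$ is open subanalytic'', the result you are actually invoking is Proposition~\ref{realanalytic} (subanalyticity of the regular set); Proposition~\ref{complement} is what one applies \emph{afterwards} to deduce that $\calS(f)=M\setminus\calR(f)$ is subanalytic. With that correction your outline for Theorem~\ref{allsingular} is essentially the paper's own argument (uniformize $\calS(f)$, resolve $f$ to a real analytic function, apply classical Sard, then use the leaf-saturation and the local product picture), but that is a separate matter from the proposition under review.
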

\begin{proposition}[Theorem 7.5, \cite{BM}: subanalyticity of the regular set] \label{realanalytic}
Let $N$ be a real analytic manifold and let $f: N\to \R$ be a continuous subanalytic function. Then the regular set of $f$ is an open subanalytic subset of $N$.
\end{proposition}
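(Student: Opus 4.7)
The plan is to exploit the subanalyticity of the singular set $\mathscr{S}(f)$ together with its $\scrF$-saturation in order to reduce the question to a classical Sard argument, then combine the resulting dimension bound with the local foliated structure to identify the level slices as discrete unions of compact leaves.

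First I would establish two structural facts about $\mathscr{S}(f)$. By Proposition \ref{realanalytic} and Proposition \ref{complement}, $\mathscr{S}(f)$ is a closed subanalytic subset of $M$. In a foliation chart $(z,w)$ adapted to $\theta=dz$, the hypothesis that $f$ is constant on leaves forces $f$ to depend only on $(z,\bar z)$, so the locus where $f$ fails to be real analytic is independent of $w$; consequently $\mathscr{S}(f)$ is saturated by the leaves of $\scrF$. Applying the resolution theorem, Proposition \ref{resolution}, to $f$ itself yields a proper surjective real-analytic $\phi\colon N\to M$ with $f\circ\phi$ analytic. At any point where $\phi$ is a local isomorphism, $f$ is locally analytic, so $\mathscr{S}(f)$ is contained in the image of the degeneracy locus of $\phi$, a proper real-analytic subset of $N$; therefore $\dim\mathscr{S}(f)\leq 3$.

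Next I would apply Proposition \ref{uniform} to the closed subanalytic set $\mathscr{S}(f)$ to obtain a proper real-analytic $\psi\colon S'\to M$ with $\psi(S')=\mathscr{S}(f)$ and $\dim S'=\dim\mathscr{S}(f)\leq 3$, and then apply Proposition \ref{resolution} to the continuous subanalytic $f\circ\psi$ on $S'$, producing a proper surjective real-analytic $\psi'\colon N'\to S'$ with $F:=f\circ\psi\circ\psi'$ real analytic on $N'$. Setting $\Phi=\psi\circ\psi'$, the composition $\Phi\colon N'\to M$ is proper real-analytic with $\Phi(N')=\mathscr{S}(f)$, and $F=f\circ\Phi$ is a real-analytic function on the real-analytic manifold $N'$ of dimension at most $3$. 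By the classical Sard theorem applied to $F$, the set of critical values of $F$ has measure zero in $\R$, so for almost every $r\in f(M)$ the fiber $F^{-1}(r)$ is a smooth real-analytic submanifold of $N'$ of dimension at most $2$; hence $\mathscr{S}(f)\cap f^{-1}(r)=\Phi(F^{-1}(r))$ is a compact subanalytic subset of $M$ of real dimension at most $2$.

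The conclusion then combines $\scrF$-saturation with this dimension bound. In a foliation chart $(z,w)$, saturation forces $\mathscr{S}(f)\cap f^{-1}(r)$ to take the product form $T_r\times D_w$ for some subanalytic $T_r$ in the transversal $z$-disk; the inequality $\dim(T_r\times D_w)\leq 2$ together with $\dim D_w=2$ and the subanalytic product dimension formula yields $\dim T_r\leq 0$, so $T_r$ is a discrete set of points, and locally $\mathscr{S}(f)\cap f^{-1}(r)$ is a finite disjoint union of plaques. Patching the foliation charts and using the compactness of $f^{-1}(r)$, the global set $\mathscr{S}(f)\cap f^{-1}(r)$ is a discrete union of complete leaves of $\scrF$, each of which appears as a connected component of the compact Hausdorff set $\mathscr{S}(f)\cap f^{-1}(r)$ and is therefore compact. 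The main obstacle I anticipate is this final transversal step: rigorously factoring the saturated subanalytic level slice as $T_r\times D_w$ in a foliation chart and invoking the subanalytic dimension formula to force $T_r$ to be genuinely $0$-dimensional, after which the passage from local plaques to global compact leaves via compactness of $f^{-1}(r)$ is routine.
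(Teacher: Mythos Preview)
Your proposal does not address the stated proposition at all. Proposition~\ref{realanalytic} is the assertion that for a continuous subanalytic function $f\colon N\to\R$ on a real analytic manifold, the regular set of $f$ is open and subanalytic. In the paper this is not proved; it is simply quoted from Bierstone--Milman (Theorem~7.5 of \cite{BM}) as one of several black-box inputs. Your write-up, by contrast, \emph{uses} Proposition~\ref{realanalytic} in its first paragraph and then proceeds to establish that for almost every $r$ the singular locus inside the level set $f^{-1}(r)$ is a discrete union of compact leaves of $\scrF$. That is the content of Theorem~\ref{allsingular}, not of Proposition~\ref{realanalytic}.

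If your intention was indeed to prove Theorem~\ref{allsingular}, then your argument is essentially the paper's own: you first bound $\dim\mathscr{S}(f)\le 3$ via the resolution of $f$ and the degeneracy locus of $\phi$ (the paper's part~b)), observe the $\scrF$-saturation in a foliation chart (part~c)), then uniformize $\mathscr{S}(f)$ and resolve $f$ restricted there to get a real-analytic $F$ on a manifold of dimension $\le 3$, apply classical Sard to $F$, and use saturation plus the dimension bound to force the transversal slice to be zero-dimensional (parts~d)--e)). The only cosmetic difference is that the paper stratifies $\tilde{E}$ explicitly into $0$- and $1$-strata before invoking Sard, whereas you run the transversal dimension count directly; both reach the same conclusion. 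But none of this is a proof of the subanalyticity of the regular set, which is what the statement actually asks for.
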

\begin{proposition}[Theorem 3.14, \cite{BM}: Bound on the number of fibers for subanalytic map]\label{bound}
Let $M$ and $N$ be real analytic manifolds and $X$ be a relatively compact subset of $M$. Let $\phi:X\to N$ be a subanalytic map. Then the number of connected components of a fiber $\phi^{-1}(y)$ is bounded locally on $N$.
\end{proposition}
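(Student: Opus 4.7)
The plan is to localize at an arbitrary point of $N$, pass to the graph of $\phi$ to turn the statement about a subanalytic map into a statement about a subanalytic set, use the uniformization theorem to replace that set by the image of a proper real analytic map, and then exploit stratification theory for proper analytic maps.

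First, I would fix $y_0\in N$ and choose a relatively compact open neighborhood $V$ of $y_0$ with compact closure $K=\overline{V}$. Let $\Gamma_\phi\subseteq M\times N$ be the graph of $\phi$, a subanalytic set. Because $X$ is relatively compact, $G:=\overline{\Gamma_\phi}\cap(\overline{X}\times K)$ is a compact subanalytic subset of $M\times N$. Let $\pi:M\times N\to N$ denote the second projection. For every $y\in V$, the fiber $\phi^{-1}(y)$ sits inside the slice $G_y:=G\cap \pi^{-1}(y)$ (via $x\mapsto(x,y)$), and by a standard closure/component argument the number of connected components of $\phi^{-1}(y)$ is at most the number of connected components of $G_y$. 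So it suffices to bound locally on $N$ the fiber components of $\pi|_G$.

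Next I would apply the uniformization theorem, Proposition \ref{uniform}, to $G$: there exist a real analytic manifold $\widetilde{M}$ of the same dimension as $G$ and a proper real analytic map $\psi:\widetilde{M}\to M\times N$ with $\psi(\widetilde{M})=G$. Set $\Phi=\pi\circ\psi:\widetilde{M}\to N$; by construction $\Phi$ is real analytic, and it is proper over $V$ because $G\cap\pi^{-1}(K)$ is compact and $\psi$ is proper. Since $\psi$ maps $\Phi^{-1}(y)$ onto $G_y$, the number of connected components of $G_y$ is dominated by the number of connected components of $\Phi^{-1}(y)$. Hence the problem is reduced to bounding, locally in $y\in V$, the number of connected components of the fibers of a proper real analytic map $\Phi$.

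For the final step, I would invoke Whitney stratification in the subanalytic category (a companion result in \cite{BM}): there exist locally finite semianalytic stratifications of $\widetilde{M}$ and of $N$ such that $\Phi$ sends each stratum of $\widetilde{M}$ submersively onto a stratum of $N$, and such that Thom's first isotopy lemma applies to give local topological triviality of $\Phi$ over each stratum of $N$. Since only finitely many strata of $N$ meet the compact set $K$, and since over each stratum the number of connected components of the fiber is constant (hence locally bounded near $y_0$), one obtains a uniform bound for $y$ in a neighborhood of $y_0$. Combining this with the two reductions above gives the claimed local boundedness.

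The main obstacle is the stratification step: one needs a proper Thom-stratified model of $\Phi$ inside the subanalytic category, together with the isotopy lemma, to conclude that fiber-component counts are locally constant along the base strata. The earlier two paragraphs are essentially bookkeeping that packages the subanalytic map $\phi$ into a proper analytic map so that this geometric stratification machinery applies; once it does, local finiteness of the stratification on the compact set $K$ delivers the bound immediately.
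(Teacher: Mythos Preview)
The paper does not supply its own proof of this proposition: it is simply quoted from \cite{BM} (Theorem~3.14 there) among a list of background facts, with the explicit remark ``cf.\ the work by Bierstone and Milman \cite{BM} for proof.'' So there is nothing in the paper to compare your argument against.

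That said, your outline has a genuine gap at the closure step. You assert that the number of connected components of $\phi^{-1}(y)$ is at most the number of connected components of $G_y=\overline{\Gamma_\phi}\cap(\overline{X}\times\{y\})$, calling this ``a standard closure/component argument.'' But inclusion $A\subseteq B$ does \emph{not} give an upper bound on the number of components of $A$ in terms of those of $B$; taking closures can merge components. Concretely, take $M=N=\mathbb{R}$, $X=(0,1)\cup(1,2)$, and $\phi\equiv 0$; then $\phi^{-1}(0)=X$ has two components, while $G_0=[0,2]\times\{0\}$ has one. So the reduction from $\phi^{-1}(y)$ to $G_y$ fails as stated, and the subsequent bound via $\Phi^{-1}(y)$ no longer controls what you want.

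The overall strategy---pass to the graph, uniformize, and stratify a proper real analytic map---is in the right spirit, but you must either (i) avoid the closure and argue directly that the fibers of the projection of the (relatively compact) subanalytic graph $\Gamma_\phi$ have locally bounded component count, using a stratification of $\Gamma_\phi$ itself compatible with $\pi$, or (ii) after uniformizing $\overline{\Gamma_\phi}$, keep track of the subanalytic subset $\psi^{-1}(\Gamma_\phi)\subseteq\widetilde{M}$ and stratify that, so that the fiber you are counting is genuinely $\phi^{-1}(y)$ rather than its closure. Either fix requires a stratification theorem for subanalytic (not merely analytic) sets compatible with a projection, which is exactly the content of the result you are trying to prove; so the argument as written is close to circular unless you invoke a more primitive finiteness statement (e.g.\ local finiteness of a subanalytic stratification).
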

Also, the subanalytic sets are ``nicely'' stratified in the sense that any point admits a locally compact neighborhood which can be written as the union of smooth real analytic manifolds of different dimensions, called strata. The lower dimensional strata are either disjoint from the higher dimensional strata, or are contained in them as the frontier (c.f. section 16, \cite{Loj} for details).\\
\\
\textit{Step (ii): Global picture.} \\
Due to Proposition \ref{resolution}, we have the following resolution of $f$:\\
\[
  \begin{tikzcd}
    N \arrow{r}{\phi} \arrow[swap]{dr}{g} & M \arrow{d}{f}\\
     & \mathbb{R}
  \end{tikzcd}
\]
where $N$ is a 4-dimensional real analytic manifold, $\phi$ is a real analytic map of generically rank 4 onto $M$ and $g=f\circ \phi$ is a real analytic function on $N$.\\
Denote $E\subseteq M$ to be the singular set of $f$. By Proposition \ref{realanalytic}, $M-E$ is open and subanalytic. Then $E$ is a closed subanalytic subset of $M$ by Proposition \ref{complement}. Let $D$ be the set on $N$ where $\phi$ has a lower rank, then $D$ is of real dimension at most 3. On the complement of $\phi(D)$, we can write locally that $f=g\circ \phi^{-1}$, which shows that $f$ is real analytic. Thus we show that $E\subseteq\phi(D)$ is of real dimension at most 3.\\
\\
\textit{Step (iii): Local picture.}\\
Locally, every $x\in M$ admits a relatively compact open subset $V$ of $M$ which is compatible with the foliation $\scrF$. Write the coordinate on $V\cong\Delta\times\Delta$ as $\{z_1,z_2\}$, where $\Delta$ denotes the unit disc on complex plane. The transversal direction to the leaves is given by $z_1$ and the leaf direction is given by $z_2$. Since $f$ is constant on the leaves, locally $f=f(z_1)$ descends to a function $\tilde{f}(z_1)$ on $\Delta$. In conclusion, we have the following diagram:\\
\[
  \begin{tikzcd}
     \Delta\times\Delta \arrow{r}{\pi_1} \arrow[swap]{d}{f} & \Delta \arrow{dl}{\tilde{f}}\\
      \mathbb{R} &
  \end{tikzcd}
\]
where $\pi_1$ denotes the projection onto the first factor.\\
Denote $\tilde{E}=\{z_1\in\Delta:\ \tilde{f}\text{\ is not real analytic at\ }z_1\}$ to be the singular set of $\tilde{f}$, then locally 
$$E\cap V=\tilde{E}\times\Delta.$$
Since $\tilde{E}$ is a 1-dimensional subanalytic subset of $\Delta$, it is semianalytic by Proposition \ref{lowdim}. Then $E$ can be seen as a disc bundle over the semianalytic set $\tilde{E}$.\\
In conclusion, we have the following commutative diagram:\\
\[
  \begin{tikzcd}
    N \arrow{r}{\phi} \arrow[swap]{ddr}{g} & M  \arrow{dd}{f}  & &  V\cong\Delta\times\Delta \arrow[left hook->,swap]{ll}{\text{inclusion}}\arrow{r}{\pi_1}\arrow[swap,dashed]{lldd}{(f)}& \Delta\arrow{ddlll}{\tilde{f}}\\
    \\
     & \mathbb{R}
  \end{tikzcd}
\]
\\
\textit{Step (iv): Stratification.}\\
We now stratify $\tilde{E}$ into 1-dimensional and 0-dimensional real analytic manifolds, i.e. $$\tilde{E}=\big(\bigcup\limits_{i} \tilde{E}_i^{0}\big)\bigcup\big(\bigcup\limits_{j}\tilde{E}_j^{1}\big)$$
where $\tilde{E}_i^{0}$'s are the 0-strata (points) and $\tilde{E}_j^{1}$'s are the 1-strata (curves). Here and following, we use an upper index to show the dimension of strata.\\
This local stratification gives the corresponding part of $E$ as\\
$$E=\big(\bigcup\limits_{i} E_i^{2}\big)\bigcup\big(\bigcup\limits_{j}E_j^{3}\big)$$
where the 2-strata $E_i^{2}$ is the leaf going through $\tilde{E}_i^{0}$ and the 3-strata $E_j^{3}$ is locally $\tilde{E}_j^{1}\times\Delta$ and globally the union of all leaves going through points in $\tilde{E}_j^{1}$. We can summarize this by saying that the singular set $E$ is holomorphically foliated by $\scrF$.\\
If we have one 2-strata, it is a leaf of $\scrF$ lying in the connected components of the level set of $f$ with the same real dimension $2$, thus they coincide. This shows that all 2-strata are compact leaves because $f$ is exhaustive. Otherwise, if almost all the strata are 3-dimensional, we will use the following.\\
\\
\textit{Step (v): Sard-type theorem on 3-strata}.\\
From now on, we focus on the restriction of $f$ to its singular set $E$, denoted as 
$$\widehat{f}=f\big|_{E}.$$
Then $\widehat{f}$ is a continuous subanalytic function defined on a 3-dimensional closed subanalytic set $E$.\\
Due to Proposition \ref{uniform}, we have a 3-dimensional real analytic manifold $F$ and a proper real analytic map $\psi_1: F\to E$ such that $\psi_1(F)=E$. Then $k:=\widehat{f}\circ \psi_1$ is a subanalytic function on a real analytic manifold $F$. By applying Proposition \ref{resolution}, the resolution of subanalytic functions, to $k$, we get the following diagram:\\
\[
  \begin{tikzcd}
    G \arrow[rr,bend left,"\psi"]\arrow{r}{\psi_2}\arrow{drr}[swap]{h} & F \arrow{r}{\psi_1}\arrow{dr}{k}& E\arrow{d}{\widehat{f}}\\
     & & \R
  \end{tikzcd}
\]
Here $G$ is a 3-dimensional real analytic manifold, $h$ is a real analytic function and $\psi_2$ is a proper real analytic map onto $F$. Then $\psi:=\psi_1\circ\psi_2$ is a proper real analytic map from a 3-dimensional real analytic manifold $G$ onto $E$.\\
Denote the set of critical values of the real analytic function $h$ in the usual sense as $\scrC(h)$. For any $r\in \R-\scrC(h)$, $h^{-1}(r)$ is a 2-dimensional real analytic manifold. Then $\dim\psi\big(h^{-1}(r)\big)\leqslant 2$. Since $\widehat{f}$ is constant on leaves, its level set $\widehat{f}^{-1}(r)$ is of real dimension at least 2. Also, $\widehat{f}^{-1}(r)=\psi\big(h^{-1}(r)\big)$ as an easy consequence of the following lemma.
\begin{lemma*}
If $l=n\circ m: X\xrightarrow{m} Y \xrightarrow{n} Z$ and $m$ is surjective, then $n^{-1}(z)=m\big(l^{-1}(z)\big)$, for any  $z\in Z$.
\end{lemma*}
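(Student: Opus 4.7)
The statement is a purely set-theoretic fact about compositions of maps, so my plan is simply to verify the two inclusions directly from the definitions. The surjectivity hypothesis will only be needed for one of the two inclusions.

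For the inclusion $f\bigl(h^{-1}(z)\bigr)\subseteq g^{-1}(z)$, I would take $y\in f\bigl(h^{-1}(z)\bigr)$, write $y=f(x)$ for some $x\in X$ with $h(x)=z$, and then compute $g(y)=g(f(x))=h(x)=z$, which puts $y$ in $g^{-1}(z)$. This direction does not use surjectivity.

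For the reverse inclusion $g^{-1}(z)\subseteq f\bigl(h^{-1}(z)\bigr)$, take $y\in Y$ with $g(y)=z$. By surjectivity of $f$, pick $x\in X$ with $f(x)=y$. Then $h(x)=g(f(x))=g(y)=z$, so $x\in h^{-1}(z)$ and hence $y=f(x)\in f\bigl(h^{-1}(z)\bigr)$. Without surjectivity, a point $y\in g^{-1}(z)$ outside the image of $f$ would fail to be in $f(h^{-1}(z))$, showing the hypothesis is necessary and sharp.

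There is no real obstacle here; the lemma is a bookkeeping step inserted to justify the earlier assertion that $\hat{f}^{-1}(r)=\psi\bigl(g^{-1}(r)\bigr)$, applied with $X=G$, $Y=E$, $Z=\mathbb{R}$, $f=\psi$, $g=\hat{f}$ and $h=\hat{f}\circ\psi$, using that $\psi$ is surjective onto $E$ by construction. Combining the two inclusions gives the claimed equality, which then feeds back into the dimension comparison in step (e) of the main argument.
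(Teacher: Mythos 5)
Your proof is correct and follows essentially the same route as the paper's: the paper dismisses the inclusion $f\bigl(h^{-1}(z)\bigr)\subseteq g^{-1}(z)$ as obvious and proves the reverse inclusion exactly as you do, by lifting $y$ through the surjection $f$. Your version merely spells out the easy direction explicitly, which changes nothing of substance.
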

\begin{proof}
One direction is obvious: $m\big(l^{-1}(z)\big)\subseteq n^{-1}(z)$. Conversely, for any $y\in n^{-1}(z)$, there is $x\in X$ with $m(x)=y$, then $z=n(y)=n\big(m(x)\big)=l(x)$. So $x\in l^{-1}(z)$. This shows that $n^{-1}(z)\subseteq f\big(l^{-1}(z)\big)$ and thus we have the equality.
\end{proof}
This forces $\widehat{f}^{-1}(r)$ to be of real dimension 2 and its connected component to coincide with one leaf of $\scrF$ (This in fact shows that $\widehat{f}^{-1}(r)$ is not only real analytic but also complex analytic). Then $r$ is a regular value for $\widehat{f}$. This shows that $\scrC(\widehat{f})\subseteq \scrC(h)$ is of measure zero thanks to the classical Sard's theorem for the real analytic function $h$.\\ 
Then the Sard-type theorem for the subanalytic function $\widehat{f}$ is proved. We can rewrite it for a slightly more general situation without any difficulty.
\begin{theorem} [Sard-type theorem for special subanalytic functions]\label{sard}
Let $N$ be a closed subanalytic subset of a real analytic manifold $M$ and $g$ be a subanalytic function defined on $N$. Assume that there is a real analytic foliation $\mathscr{G}$ of real codimension 1 in $N$ such that the level set of $g$ is foliated by $\mathscr{G}$ with the exception at most on a subset of measure zero in $g(N)\subseteq\R$, then the set of critical values of $g$ is of measure zero in $\R$.
\end{theorem}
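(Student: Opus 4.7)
The plan is to run the argument just given for $\hat{f}$ essentially verbatim in this more general setting, replacing $E$ by $N$, $\hat{f}$ by $f$, and $\scrF$ by $\mathscr{G}$. The key engine is the combination of uniformization and resolution of subanalytic functions, which reduces the problem to the classical Sard's theorem for a real analytic function, together with a dimension count that uses the codimension-$1$ foliation hypothesis.

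First I would apply Proposition \ref{uniform} to the closed subanalytic set $N \subseteq M$, obtaining a real analytic manifold $F$ with $\dim F = \dim N$ and a proper real analytic surjection $\psi_1 : F \to N$. Setting $k := f \circ \psi_1$ yields a continuous subanalytic function on $F$, and Proposition \ref{resolution} applied to $k$ will produce a real analytic manifold $G$ with $\dim G = \dim F$ and a proper real analytic surjection $\psi_2 : G \to F$ such that $g := k \circ \psi_2 = f \circ \psi$ is real analytic on $G$, where $\psi := \psi_1 \circ \psi_2$ is proper, real analytic, and surjective. The classical Sard's theorem applied to $g$ then gives $\scrC(g) \subseteq \R$ of Lebesgue measure zero.

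Next, letting $Z \subseteq f(N)$ denote the measure-zero exceptional set on which the level sets of $f$ may fail to be foliated by $\mathscr{G}$, I would aim to prove $\scrC(f) \subseteq \scrC(g) \cup Z$, which is enough. To this end, fix $r \in f(N) \setminus (\scrC(g) \cup Z)$. Since $r \notin \scrC(g)$, $g^{-1}(r)$ is a real analytic submanifold of $G$ of codimension one, so of real dimension $\dim N - 1$. By surjectivity of $\psi$ and the same set-theoretic lemma already invoked above, $f^{-1}(r) = \psi(g^{-1}(r))$, hence $\dim f^{-1}(r) \leqslant \dim N - 1$. On the other hand, $r \notin Z$ forces $f^{-1}(r)$ to be a union of leaves of $\mathscr{G}$, each of dimension $\dim N - 1$, yielding the reverse inequality; equality should then force $f^{-1}(r)$ to be a discrete union of leaves, and hence a real analytic submanifold of $M$, so that $r$ is a regular value of $f$ in the sense of Definition \ref{critical_regular}.

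The step I expect to demand the most care is this last deduction: verifying that a subanalytic union of codimension-one leaves of $\mathscr{G}$ of the same dimension as a single leaf is necessarily a real analytic submanifold. The strategy is to work in a foliated chart of the ambient manifold, in which $f^{-1}(r)$ locally takes the form $\{y \in S_{\mathrm{loc}}\}$ for some $S_{\mathrm{loc}} \subseteq \R$; closedness of $f^{-1}(r)$, together with its subanalyticity (inherited as the image of a real analytic manifold under the proper real analytic map $\psi$) and the dimension constraint $\dim f^{-1}(r) = \dim N - 1$, should force $S_{\mathrm{loc}}$ to be discrete, delivering the submanifold structure. The edge case $r \notin f(N)$ is handled automatically by the convention adopted in Definition \ref{critical_regular}.
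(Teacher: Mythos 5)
Your proposal is correct and follows essentially the same route as the paper's own argument (given in part e) of Section 3 for $\hat{f}$ and then restated as the general theorem): uniformize $N$, resolve the pulled-back subanalytic function to get a proper real analytic $\psi:G\to N$ with $g=f\circ\psi$ real analytic, invoke the classical Sard theorem for $g$, and play the dimension bound $\dim\psi\big(g^{-1}(r)\big)\leqslant \dim N-1$ against the codimension-one foliation to force generic level sets to be discrete unions of leaves, hence real analytic. The only difference is that you spell out two points the paper treats as immediate, namely the explicit inclusion $\scrC(f)\subseteq\scrC(g)\cup Z$ accounting for the exceptional measure-zero set and the verification in a foliated chart that a leaf-dimensional union of leaves is a locally finite union and therefore a submanifold.
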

Now we can finish the proof. For any $r\notin \scrC(\widehat{f})\cup f(\mathcal{I})$ (a set of measure zero), we have that 
$$\widehat{f}^{-1}(r)=f^{-1}(r)\cap E\neq\emptyset,$$
for no level set of $f$ is real analytic.\\
Furthermore, the singular set of $f$ lying in $f^{-1}(r)$ (which is closed) is of real codimension 2 and thus coincides with a discrete union of some holomorphic leaves of $\scrF$ contained in $f^{-1}(r)$. The closed leaf is compact because it is contained in the level set of an exhaustion function $f$ and the discrete union contains finitely many compact leaves. This shows the existence of at least one compact leaf in each generic level set of $f$.
\end{proof}
The remaining step from theorem \ref{allsingular} to prove theorem \ref{convexity_subanalytic} is standard. Denote $Q$  to be the set of points in $M$ which lie in some compact leaves of $\scrF$. Then $Q$ is nonempty. We will use the argument on connectedness below.\\
Firstly, we show that $Q$ is open following the work by Napier and Ramachandran (c.f. proof of Proposition 2.3, \cite{NR1997}). Let $x_0\in Q$ and $L_0$ be the compact leaf it lies in. It can be globally defined as $\theta=dF$ for some holomorphic function $F: U\to\mathbb{C}$ in a relatively compact neighborhood $U$ of $L_0$ in $M-\Big(F^{-1}\big(F(x_0)\big)- L_0\Big)$. Since $\partial U$ is compact, we can find a neighborhood $V\subseteq \mathbb{C}$ of $F(x_0)$ such that $F^{-1}(V)\cap \partial U=\emptyset$. Hence the leaf given by $F^{-1}(w)$ for any $w\in V$ is also compact. Thus $x_0\in F^{-1}(V)\subseteq Q$ and $Q$ is open. \\
Next, we show that $Q$ is closed by considering the volume of these compact leaves. The volume of closed subvarieties of dimension $k$ is a homotopy invariant, for its volume form $d \text{Vol}=\frac{\omega^k}{k!}$ is closed where $\omega$ is the K\"ahler form on $M$. Assume that $x_n\to x_0$ when $n\to\infty$ and $L_n$ is the compact leaf containing $x_n$. Denote $L_0$ to be the leaf going through $x_0$. The volumes of $L_n$'s are bounded by $V$ and $L_n$'s are of bounded recurrence $N$ (c.f. Proposition \ref{bound}) in the sense of Definition 3 of \cite{Mok}. Then $L_0$ is also of finite volume, which is bounded by $N\cdot V$. Because $M$, as a Galois covering over a compact K\"ahler surface, is of bounded geometry, we must have $L_0$ is also compact. This shows that $x_0\in Q$ and thus $Q$ is closed.
\begin{remark}
We can prove the closeness slightly easier in our case. With the same notation, we have $L_n\subset \{x\in M:f(x)=f(x_n)\}$, then the limit $f(x_0)=\lim\limits_{n\to\infty}f(x_n)$ exists and $L_0\subset \{x\in M:f(x)=f(x_0)\}$. Since $f$ is exhuastive, we know that $L$ is compact.
\end{remark}
Now, since $Q$ is nonempty, open, and closed and $M$ is connected, we must have $Q=M$. Denote the holomorphic equivalent relation $\mathscr{R}$ given by these compact leaves (i.e. $x\sim y$ if and only if they lie in the same compact leaf), then we get a Riemann surface $R=M/\mathscr{R}$ and the natural holomorphic map $\alpha: M \to R$ is proper and has connected fibers by construction. In particular, $M$ is holomorphically convex. This finished the proof of the holomorphic convexity in the subanalytic case.\\
\\
With Theorem \ref{convexity_subanalytic} working as a replacement of the $p$-adic factorization theorem, we get the K\"ahler version of Katzarkov and Ramachandran's result on algebraic surfaces (c.f. Main Theorem, \cite{KR}) in the next section.
\section{Generalization from algebraic surfaces to K\"ahler surfaces} \label{applications}
In this section, we will provide a different proof of the main theorem (Theorem 1.2) of \cite{KR}. The proof there uses the $p$-adic factorization theorem (c.f. Theorem 1.3 \cite{KR}) to deal with the non-smooth case, and this is only available when $X$ is smooth projective. With the above theorem \ref{convexity_subanalytic} at hand, we can generalize their result from smooth projective (algebraic) surfaces to K\"ahler surfaces.
\begin{theorem}\label{shaf}
If $X'\to X$ is a reductive Galois covering over a compact K\"{a}hler surface with a faithful representation $\rho$ and it does not have two ends, then $X'$ is holomorphically convex.
\end{theorem}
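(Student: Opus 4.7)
The plan is to imitate the proof of Theorem 1.2 in Katzarkov--Ramachandran \cite{KR}, substituting our Theorem \ref{non_archimedean_convexity} for their use of the factorization theorem in the non-archimedean case. Since Theorem \ref{non_archimedean_convexity} is valid on any K\"ahler surface (not merely a projective one), the entire argument should extend from projective to K\"ahler once that substitution is made, provided the rest of the \cite{KR} framework does not secretly rely on projectivity.

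First I would dispose of the easy case of three or more ends using the classical result of Napier--Ramachandran (Theorem 3.4 of \cite{NR1995}), which directly produces a proper holomorphic map onto a Riemann surface with connected fibers. Since by hypothesis $X'$ does not have two ends, we may therefore assume that $X'$ has exactly one end. Given a faithful representation $\rho:\Gamma\to G$ with reductive Zariski closure $H$, one has $\rho(\Gamma)\subseteq H(K)$ for some finitely generated subfield $K\subseteq\mathbb{C}$. Considering the induced representations $\rho_v:\Gamma\to H(K_v)$ at the various (archimedean and non-archimedean) places $v$ of $K$, and following the arithmetic reduction of \cite{KR}, one selects a finite collection of places $v_1,\ldots,v_k$ for which each $\rho_{v_i}$ is discrete and $\bigcap_i\ker(\rho_{v_i})=\{e\}$. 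For each $v_i$, the intermediate Galois covering $X_{v_i}:=X'/\ker(\rho_{v_i})$ of $X$ is a reductive covering with discrete Galois group, and as a quotient of the one-ended $X'$ it does not have two ends. Thus Theorem \ref{archimedean_convexity} (for archimedean $v_i$) or Theorem \ref{non_archimedean_convexity} (for non-archimedean $v_i$) applies, and produces a proper holomorphic map $\alpha_{v_i}:X_{v_i}\to R_{v_i}$ onto a Riemann surface with connected fibers.

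It remains to assemble these maps into a holomorphic convexity statement for $X'$ itself. I would lift each $\alpha_{v_i}$ along the quotient map $X'\to X_{v_i}$, form the joint map $X'\to\prod_i R_{v_i}$, and pass to its Stein factorization; the triviality of the joint kernel $\bigcap_i\ker(\rho_{v_i})$ together with the holomorphic convexity of each $X_{v_i}$ should then force holomorphic convexity of $X'$, as in \cite{KR}. The main obstacle is precisely this last assembly: one must verify that no projectivity enters in the \cite{KR} machinery for combining the archimedean and non-archimedean factorizations, and that the various $X_{v_i}$ genuinely satisfy the hypotheses (discreteness of $\rho_{v_i}$ and absence of two ends) required to invoke Theorems \ref{archimedean_convexity} and \ref{non_archimedean_convexity}. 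The latter reduces to a careful arithmetic analysis of reductive representations over finitely generated fields, while the former is the routine part of \cite{KR}'s argument once the non-archimedean factorization is replaced by our main theorem.
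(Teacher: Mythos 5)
Your high-level plan (replace the factorization theorem of \cite{KR} by Theorem \ref{non_archimedean_convexity} and rerun the Katzarkov--Ramachandran argument) is indeed the strategy of the paper, but the proposal has genuine gaps at exactly the points where the real work happens. First, you cannot in general find places of the field of definition of the \emph{original} representation $\rho$ at which the induced representation is discrete, let alone with trivial joint kernel. The paper (following Lemmas 2.1 and 2.2 of \cite{KR}) must first split $G$ into its compact abelian and semisimple factors and \emph{deform} $\rho$ to a Zariski-dense representation $\rho'$ defined over $\overline{\mathbb{Q}}$; only then does the adelic construction make the image discrete in $\prod_{\nu}\mathrm{Symm}_{\nu}\times\prod_{\mathfrak{p}}B_{\mathfrak{p}}$. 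This deformation can change the kernel drastically, so one obtains holomorphic convexity only of the intermediate covering $X''=X'/\ker(\rho')$, and your proposed ``assembly'' via a joint map to $\prod_i R_{v_i}$ and Stein factorization is not available. The actual descent from $X''$ to $X'$ is Lemma \ref{intermediate}: by Napier's theorem the only obstruction to lifting holomorphic convexity to $X'$ is an infinite chain of compact curves, and such a chain is ruled out by the Lasell--Ramachandran theorem applied to the original faithful reductive $\rho$.

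Second, you have suppressed the rank dichotomy that organizes the whole proof. When the pluriharmonic map associated with $\rho'$ has rank $2$, the induced foliation has zero-dimensional leaves and there is no proper holomorphic map onto a Riemann surface; instead $g(x)=d^2(u(x),y_0)$ is generically strictly plurisubharmonic and one concludes by Lemma \ref{deg}. Theorems \ref{archimedean_convexity} and \ref{non_archimedean_convexity} enter only in the rank-$1$ case. Finally, your claim that $X_{v_i}$ cannot have two ends ``as a quotient of the one-ended $X'$'' is not a valid general principle (the number of ends is not monotone under quotients; compare $\mathbb{R}^2$ and a cylinder); the paper instead argues that a two-ended group is virtually $\mathbb{Z}$, which is incompatible with Zariski density of the image in the reductive monodromy group.
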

Even though the essential adjustment is only in the non-smooth case, we include all the details from \cite{KR} here for the convenience of readers and point out explicitly where we apply the theorem \ref{convexity_subanalytic} instead of the $p$-adic factorization theorem.\\
Firstly, we can deform the reductive representation $\rho:\Gamma\to G$ to a discrete representation $\rho'$ as follows. By taking a finite covering of $X'$, we may assume that group $G$ is torsion-free. Write $G= S\times C$, where $C$ is the maximal compact abelian factor contained in $G$ and $S$ is the semisimple part of $G$.\\ 
Denote $\rho_C: \Gamma\to G\to C=G/S$. By Lemma 2.1 of \cite{KR}, every abelian representation $\rho_C$ can be replaced with another representation $\rho_C': \Gamma\to C$ which is defined over $\overline{\mathbb{Q}}$ and whose kernel is commensurable to $\text{ker}(\rho_C)$. Similarly, we consider the semisimple representation $\rho_S: \Gamma\to G\to S=G/C$. By Lemma 2.2 of \cite{KR}, we have a neighborhood $W$ of $\rho_S$ where any representation has a Zariski dense image. In particular, there is a Zariski dense representation $\rho_S'$ defined over $\overline{\mathbb{Q}}$ in $W$. Now consider the construction of adelic type for $\rho_S'$. Since $\Gamma$ is finitely generated, $\rho_S'$ is defined over a finite extension $E$ of $\mathbb{Q}$ and we consider the completion with distinct valuations. When the valuation place is non-archimedean, say $\mathfrak{p}$, we have the completion $E_{\mathfrak{p}}$ with action on a locally compact Euclidean building $B_{\mathfrak{p}}$; when the valuation place is archimedean, say $\nu$, we have the completion $E_\nu$ with action on a Riemannian locally symmetric space Symm$_\nu$ of noncompact type. Consider the direct product $Y_1=\prod\limits_{\nu}{\text{Symm}}_{\nu}\times \prod\limits_{\mathfrak{p}} B_{\mathfrak{p}}$ with the action of $\Gamma$ on the diagonal, where only finitely many terms can be nontrivial. Since $E_{\mathfrak{p}}$ is discrete in $\prod\limits_{\nu}\text{E}_{\nu}\times \prod\limits_{\mathfrak{p}} E_{\mathfrak{p}}$, the image of $\rho_S'$ acts discretely on $Y_1$.\\
After doing the deformation on $\rho_C$ and $\rho_S$ separately, we get a discrete representation $\rho'$ of $\Gamma$ over $\overline{\mathbb{Q}}$, acting on $Y=Y_1\times \mathbb{C}^k$, where $Y_1$ comes from the deformation of $\rho_S$ and $\mathbb{C}^k$ comes from the deformation of $\rho_C$. Denote $Y= N(\rho')$, then $N(\rho')$ is a non-positively curved space and we can construct a $\Gamma$-equivariant pluriharmonic map $u: X'\to N(\rho')$, which comes from harmonic maps into each factors as shown in section 1.\\
Recall the definition of the rank of a pluriharmonic map.
\begin{definition}
The rank of a pluriharmonic map is the codimension of the holomorphic foliation it induces.
\end{definition}
Now we are ready to prove Theorem \ref{shaf}. As $X'$ is of complex dimension $2$, the rank of a pluriharmonic map $u$ is either $1$ or $2$.
\begin{proof}[Proof of Theorem \ref{shaf}]
We consider the following two cases.\\
{\it Case 1. there is a representation $\rho'$ defined over $\overline{\mathbb{Q}}$ for which the associated pluriharmonic map $u$ has rank $2$.}\\
This means the dimension of the leaves of the foliation $\scrF$ given by $u$ is generically zero. Let $X''$ be the Galois covering that corresponds to the image of $\rho':\Gamma\to G\leqslant GL_N(\overline{\mathbb{Q}})$, i.e.\\
\[
  \begin{tikzcd}
    X' \arrow{dr}{\ker{(\rho')}}\arrow{dd}[swap]{\Gamma}&\\
     & X''\arrow{ld}{\rho'(\Gamma)}\\
     X&\\
  \end{tikzcd}
\]
Take $u:X''\to N(\rho')$ to be the associated $\Gamma$-equivariant pluriharmonic map and define $f(x)=d^2\big(u(x), y_0\big)$, where $d$ is the distance function on $N(\rho')$ and $y_0$ is a fixed point there. The degeneracy locus of $\sqrt{-1}\partial\bar{\partial}f$ given by the leaves of this foliation is a set $B$ of discrete points and the function $f$ is strictly plurisubharmonic away from this subset. In addition, it is an exhaustion function for $\rho'$ is discrete. By thoerem \ref{deg}, $X''$ is holomorphically convex. Then we may consider its Cartan-Remmert reduction $S(X'')$, which is a complex surface. By the result of Napier \cite{Napier}, the obstruction of holomorphic convexity for this surface $X''$ is the existence of an infinite chain 
(i.e. a connected noncompact analytic curve all of whose irreducible components are compact). Assume $Z''$ is a finite chain of compact irreducible components in $X''$ whose lift to $X'$ is an infinite chain $Z'$, then $\ker{\rho'}$ has to be infinite.\\
Now we are in the place to apply the result by Lasell and Ramachandran (c.f. section 2, \cite{LR}), which is stated for K\"ahler surfaces as follows:
\begin{lemma}
Let $X'$ be a reductive Galois covering surface over a compact K\"ahler surface $X$ with group $\Gamma$ and its representation $\rho$, $Z'=\bigcup\limits_{i=1}^{\infty} D_i$ be a connected analytic subvariety of $X'$, all the irreducible components $D_i$'s of which are compact. If $\rho(\pi_1(D_i))$ is trivial for any $i$, we have $\rho$ restricted to $\pi_1(Z')$ factors through a finite group $\Delta_n$, the order of which depends only on the rank $n$ of $G$.
\end{lemma}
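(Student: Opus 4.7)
The plan is to exploit the pluriharmonic equivariant map $u\colon X'\to N(\rho)$ associated to $\rho$ (in the spirit of Section~2): show that $u$ is constant along $Z'$, deduce that $\rho(\pi_1(Z'))$ fixes a single point of $N(\rho)$ and therefore lies in a compact subgroup of $G$, and then combine this with the discreteness of $\rho$ to conclude finiteness; a finer analysis of the compact stabilizer in a reductive group of rank $n$ then yields the bound $\Delta_n$.

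First, for each irreducible component $D_i$, the hypothesis $\rho(\pi_1(D_i))=1$ lets $u$ descend from the universal cover of $D_i$ to a well-defined continuous map $u_i\colon D_i\to N(\rho)$. Pulling back to a desingularization $\widetilde D_i\to D_i$ gives a compact K\"ahler manifold (a compact Riemann surface, since $Z'$ is $1$-dimensional inside the surface $X'$) mapping harmonically into the non-positively curved target $N(\rho)$. For any basepoint $p\in N(\rho)$, the composition $d^2(u_i(\cdot),p)$ is subharmonic (convex $\circ$ harmonic into NPC), and the maximum principle on the compact source forces it to be constant; taking $p=u_i(x_0)$ then yields $u_i\equiv c_i$ for some $c_i\in N(\rho)$. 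Any intersection point $q\in D_i\cap D_j$ gives $c_i=u(q)=c_j$, and connectedness of the chain $Z'=\bigcup_i D_i$ propagates this to a single value $u(Z')=\{c\}$.

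Second, $\Gamma$-equivariance of $u$ implies that every $\gamma\in\pi_1(Z')\subseteq\Gamma$ satisfies $\rho(\gamma)\cdot c=c$, so $\rho(\pi_1(Z'))\subseteq\mathrm{Stab}_G(c)$. In the archimedean case this stabilizer is a conjugate of the maximal compact subgroup $K\subseteq G$; in the non-archimedean case it is a compact open subgroup of $G$ (a cell stabilizer in the building). Either way it is compact, and the discreteness of $\rho$ forces $\rho(\pi_1(Z'))$ to be a discrete subgroup of a compact group, hence finite.

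The hard step is the final one: extracting a uniform bound $|\Delta_n|$ depending only on the rank $n$. Finite subgroups of compact Lie groups need not have order bounded by the rank alone (cyclic subgroups of a torus can already be arbitrarily large), so additional structure must be extracted from the fact that the image lies not merely in some compact subgroup but in the point-stabilizer of a pluriharmonic map attached to a reductive representation. I would expect this to be achieved by combining (i) an explicit description of such stabilizers in terms of root data attached to a rank-$n$ reductive group with (ii) a Jordan--Schur or Larsen--Pink-type refinement of Jordan's theorem controlling finite subgroups of a reductive group by its rank, together with the observation that the cycle structure of the dual graph of $Z'$ only contributes generators of controlled order (for instance, lying in the normalizer of a fixed maximal torus at $c$). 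The first two steps are quite soft — maximum principle plus equivariance — and it is precisely this final uniform bound, where the reductivity of the target is used in an essential way, that constitutes the genuine obstacle.
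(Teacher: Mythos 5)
The paper offers no proof of this lemma: it is quoted verbatim as Theorem~1.1 of Lasell--Ramachandran \cite{LR}, so there is nothing in-paper to compare against and your attempt has to stand on its own. Your first two steps do follow the standard opening of the \cite{LR} argument: the restriction of the pluriharmonic map to each compact component is constant because $d^2(u(\cdot),p)$ is subharmonic on a compact curve, connectedness of the chain gives a single value $c$, and equivariance places $\rho(\pi_1(Z'))$ inside the (compact) stabilizer of $c$. (Two small remarks: since $u$ is already defined on $X'\supseteq D_i$ no descent is needed, and the constancy argument as you run it never actually uses the hypothesis $\rho(\pi_1(D_i))=1$; also $\pi_1(Z')$ is not literally a subgroup of $\Gamma=\pi_1(X)/\pi_1(X')$, though that imprecision is inherited from the statement itself.)

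The genuine gap is in your passage from compactness to finiteness. You invoke discreteness of $\rho$, but the lemma does not assume it, and --- decisively --- the paper applies the lemma to the \emph{original} faithful reductive representation $\rho$, not to the deformed discrete representation $\rho'$ (see Case~1 of the proof of Theorem~\ref{shaf}: ``Because the image of $\rho:\pi_1(D_i)\to G$ is finite \dots\ Since $\rho$ is faithful \dots''). Without discreteness, a finitely generated subgroup of a compact group can perfectly well be infinite (an irrational rotation in $U(1)\subset GL_1(\mathbb{C})$), so ``contained in a point stabilizer'' proves nothing by itself. The missing idea, which is the actual content of \cite{LR} and which simultaneously yields the uniform bound you leave open, is arithmetic: one reduces to a representation defined over a number field and runs the fixed-point argument at \emph{every} place --- symmetric spaces at the archimedean places and Bruhat--Tits buildings at the non-archimedean ones --- so that $\rho(\pi_1(Z'))$ is bounded at all places. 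Kronecker's theorem then forces every eigenvalue of every element to be a root of unity of degree bounded by $n$, the Burnside--Schur theorem makes the finitely generated torsion linear group finite, and Jordan's theorem supplies the bound $|\Delta_n|$ depending only on $n$. Your closing paragraph correctly senses that a Jordan-type result is needed, but without the ``bounded at all places $\Rightarrow$ torsion'' step there is no torsion group to apply it to, so as written the proof establishes neither finiteness (in the generality required) nor the uniform bound.
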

Because the image of $\rho:\pi_1 (D_i)\to G$ is finite for each $i$, by the above lemma, we have $\rho(\pi_1(Z'))$ is also finite. Since $\rho$ is faithful, this means $\pi_1(Z')$ is finite, which contradicts the fact that $Z'$ is an infinite chain.\\
We summarize the above result as the following.
\begin{lemma} \label{intermediate}
Let $X_2\to X_1\to X$ be a tower of infinite Galois coverings over a compact K\"ahler surface $X$. Assume that the Galois group $\Gamma_2=\text{Galois}(X_2/X)$ admits a(an) (almost) faithful linear reductive representation into a complex Lie group and the intermediate covering $X_1$ is holomorphically convex, then $X_2$ is also holomorphically convex.
\end{lemma}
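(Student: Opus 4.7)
The plan is to argue by contradiction, adapting the argument in Case 1 of the proof of Theorem \ref{shaf} to the abstract tower $X_2 \to X_1 \to X$. Suppose $X_2$ were not holomorphically convex. Since $X_2$ is a complex surface (a Galois covering of the compact complex surface $X$), Napier's result \cite{Napier} says that the only obstruction to holomorphic convexity of a complex surface is the existence of an infinite chain. Hence there exists a connected analytic subset $Z_2 = \bigcup_{i=1}^{\infty} D_i \subset X_2$ all of whose irreducible components $D_i$ are compact.

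Next I bring in the hypothesis that $X_1$ is holomorphically convex. Let $\pi: X_2 \to X_1$ be the intermediate covering map. Then $\pi(Z_2)$ is a connected analytic subset of $X_1$ whose irreducible components $\pi(D_i)$ are compact (as images of compact sets under a proper map). Applying Napier again, $X_1$ being holomorphically convex forces $\pi(Z_2)$ to be a finite chain; that is, only finitely many distinct compact irreducible components appear among the $\pi(D_i)$. Consequently infinitely many of the $D_i$ lie over the same component in $X_1$, and the deck group $\text{Galois}(X_2/X_1)$ (a subgroup of $\Gamma_2$) contains infinitely many elements permuting these $D_i$'s, so the orbit structure is genuinely infinite.

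I then invoke the Lasell--Ramachandran lemma quoted above. Each $D_i$ is compact in $X_2$, which forces the image of $\pi_1(\bar{D}_i)$ in $\Gamma_2 = \pi_1(X)/\pi_1(X_2)$ to be finite (where $\bar{D}_i$ denotes the image of $D_i$ in $X$; this image controls the sheet number of the finite covering $D_i \to \bar{D}_i$). Composing with $\rho$ shows $\rho(\pi_1(D_i))$ is finite for every $i$, and the lemma then yields that $\rho$ restricted to $\pi_1(Z_2)$ factors through a finite group $\Delta_n$.

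Since $\rho$ is almost faithful, the corresponding image of $\pi_1(Z_2)$ in $\Gamma_2$ is forced to be finite. But this contradicts the infinite orbit structure of $\text{Galois}(X_2/X_1) \leqslant \Gamma_2$ on $Z_2$ established above: the infinitely many deck transformations needed to carry one $D_i$ to another must be visible inside the image of $\pi_1(Z_2)$ in $\Gamma_2$, yet that image has just been shown to be finite. This contradiction forces $X_2$ to be holomorphically convex. The principal obstacle is the careful bookkeeping between the topological fundamental group $\pi_1(Z_2)$ (and $\pi_1(\bar{D}_i)$) and the deck transformations in $\text{Galois}(X_2/X_1)$, so that ``infinite Galois orbit on the chain'' translates to ``infinite image in $\Gamma_2$''; this bridge is precisely what Theorem 1.1 of \cite{LR} provides, and it is why the almost-faithfulness of $\rho$ is essential to close the argument.
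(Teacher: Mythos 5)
Your proposal is correct and follows essentially the same route as the paper: reduce via Napier's result to the existence of an infinite chain in $X_2$, use the holomorphic convexity of $X_1$ to see the chain lies over a finite chain downstairs, and then combine the Lasell--Ramachandran lemma with the (almost) faithfulness of the reductive representation of $\Gamma_2$ to force the image of $\pi_1$ of the chain to be finite, contradicting the infiniteness of the covering $Z_2\to\pi(Z_2)$. The paper's own justification is exactly this contradiction argument (stated just before the lemma, in Case 1 of Theorem \ref{shaf}), so no further comparison is needed.
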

{\it Case 2: there is a neighborhood $W$ of $\rho$ such that for every representation $\rho'\in W$ defined over $\overline{\mathbb{Q}}$, the rank of the associated pluriharmonic map $u$ is $1$.}\\
As before, we may assume $\rho'$ is a discrete representation with Zariski dense image in a reductive complex Lie group $G$ over $\overline{\mathbb{Q}}$. Since any group with two ends contains an infinite cyclic group $\mathbb{Z}$ of finite index, we know the image of $\rho'$ cannot have two ends.\\
If $N(\rho')$ contains no Euclidean building factor, we apply Theorem \ref{archimedean_convexity}; if there is at least one non-trivial Euclidean building factor in $N(\rho')$ for any $\rho'\in W$, we use Theorem \ref{convexity_subanalytic}. In both cases, we get a holomorphic map from some intermediate covering surface $X''$ onto a Riemann surface with connected fibers, which shows $X''$ is holomorphically convex. Then by Lemma \ref{intermediate}, we know $X'$ is holomorphically convex.
\end{proof}
\bibliographystyle{alpha}
\bibliography{references}

\end{document}